\documentclass[12pt]{amsart}
\usepackage{amssymb}
\usepackage{amsmath}
\usepackage{amsthm}
\usepackage{graphicx}

\newtheorem{Theorem}{Theorem}[section]
\newtheorem{Example}[Theorem]{Example}
\newtheorem{Corollary}[Theorem]{Corollary}

\newtheorem{Definition}[Theorem]{Definition}
\newtheorem{Remark}[Theorem]{Remark}

\begin{document}
	
	\title[On Transitivities for Skew Products]
	{On Transitivities for Skew Products}

		\author{Nayan Adhikary and Anima Nagar}
	
	\address{Department of Mathematics, Indian Institute of Technology Delhi,
		Hauz Khas, New Delhi 110016, INDIA.}
	
	\email{nayanadhikarysh@gmail.com, anima@maths.iitd.ac.in}
	
	\subjclass[2020]{37B05, 47A16}
	
	\begin{abstract} 
		The dual concepts of `universality' and `hypercyclicity' are better understood and studied as `topological transitivity'. In this article we consider transitivity properties of skew products, essentially with non-compact fibers. We study the `Universality Conditions' and `Hypercyclicity Criterion' associated with  the dynamical properties of transitivity, weakly mixing and mixing for these skew products.
	
	\end{abstract}
	\maketitle
	\smallskip
	\noindent{\bf\keywordsname{}:} {skew products, linear operators, hypercyclicity, topological transitivity, weakly mixing, mixing.}

	\section{Introduction}
	A \emph{topological dynamical system} is a pair $(X,G)$, where $X$ is usually a Hausdorff phase space and $G$ is a topological group or monoid acting on $X.$ If $G$ is a group, the system $(X,G)$ is called a \emph{flow}, whereas if $G$ is a monoid then $(X,G)$ is called a \emph{semiflow}.  When $G=\mathbb{Z},$ there is a generating homeomorphism $T:X\to X$ such that the action is given by the iterates of $T$, resulting in the \emph{cascade} $(X,T).$ On the other hand,  $G=\mathbb{Z}_+$ results in a \emph{semicascade} $(X,T),$ where $T$ is a continuous self map.\par 
		The  dynamical properties  of ``\emph{topological transitivity}" and its several stronger forms like \emph{``weakly mixing''}, \emph{``mixing''}, \emph{``minimality''} form an important study in   topological dynamics.\par 
	Skew product transformations were defined by H. Anzai \cite{Anz} to obtain certain counter examples in ergodic theory. In \cite{ellis}, R. Ellis constructed some minimal discrete flows as skew product extensions of minimal flows.	 In \cite{KS} K. Schmidt outlined a general theory of skew product extensions. Our skew products are essentially based on these ideas in a topological set up. \par 
	
Transitivity properties of skew products comprise an interesting study when their fibers are unbounded manifolds. The question we take up here is -- `when can a skew product with a non-compact fiber be transitive, weakly mixing or mixing?'.	We explore some conditions for topological transitivity of skew products in a topological setup and further investigate these properties  in the context of linear dynamics. Due to the linear structure, such transitivity conditions can be nicely formulated via `Universality' and `Hypercyclicity' conditions. In \cite{bayart} F. Bayart et. al. studied some conditions of topological transitivity for skew products of linear operators.  Here we analyze certain aspects of their conditions and relate them with weakly mixing and mixing properties of such skew products.\par

	Throughout $\mathbb{Z}$, $\mathbb{Z}_+$,  $\mathbb{N}$ and $\mathbb{R}$ stand for the usual space of all integers, non-negative integers, natural numbers and reals respectively; and all spaces in the sequel are assumed to be Hausdorff spaces without isolated points. The word `opene' stands for nonempty and open. \par
	
	\bigskip

	In section 2 we discuss some preliminaries from topological dynamics, measurable dynamics and linear dynamics. In section 3 we  investigate several transitivity properties for  skew products in a topological setup.  Section 4 concerns linear dynamics. We observe that all the topological conditions discussed in the section 3 have an interesting representation due to the underlying linear structure.\par

	\section{Preliminaries}
	
	\subsection{ Topological Dynamics.} We mostly refer to \cite{V, GH} for the basics mentioned here.
	
	 Let $(X,G)$	be a (semi)flow. Note that here, each $g \in G$ can be indentified with an element of $X^X$, where $ g $ can be regarded as a homeomorphism on $X$ if $G$ is a group and as a continuous self map on $X$ if $G$ is a monoid. The study mainly concerns the asymptotic properties of the orbits $\mathcal{O}_G(x) = \{gx: g \in G\}$ for all $x \in X$. For any pair of opene subsets $U$ and $V$ of $X$ we consider the hitting time sets $N_G(U,V)$ or simply $N(U,V)=\{g\in G:g(U)\cap V \neq \emptyset\}$ and for any point $x\in U,$ $N(x,U)=\{g\in G: g(x)\in U \}.$ 
	
	A (semi)flow $(X,G)$ is called \emph{Topologically Transitive} if for every pair of opene sets $U, V \subset X$, there exists $g\in G$ such that $g(U)\cap V \neq \emptyset,$ i.e. $N_G(U,V)$ is nonempty. $(X,G)$ is \emph{Minimal} if for every $x\in X$ the orbit set $\mathcal{O}_G(x)$ is dense in $X$, is called \emph{Weakly Mixing} if the product system $(X \times X, G)$ with the diagonal action of $G$ is topologically transitive, and is called \emph{Mixing or Topologically Mixing} if for every pair of opene  sets $U, V \subseteq X$,  $g(U) \cap V \neq \emptyset$ for every $g$ in the complement of a compact set in $G$.

	\vspace{0.1cm}
	
	Since most of our results are about (semi)cascades, we further explore these dynamical properties for (semi)cascades.\par
	
Recall that a subset $A\subset (\mathbb{Z}_+)\mathbb{Z}$ is called \emph{cofinite} if there exists $m\in \mathbb{N}$ such that $(\mathbb{Z}_+)\mathbb{Z}\setminus \{k: |k| \leq m\}\subset A,$ \emph{thick} if for any $k\in \mathbb{N},$ there exists $n\in (\mathbb{Z}_+)\mathbb{Z}$ such that $\{n,n+1,\dots, n+k\}\subset A$ and \emph{syndetic} if there exists some $M\in \mathbb{N}$ such that for every $n\in (\mathbb{Z}_+)\mathbb{Z},$ $\{n,n+1,\dots, n+M\}\cap A \neq \emptyset.$ \par

\bigskip 	
	
		 The (semi)cascade $(X,T)$ is \emph{Topologically Transitive} if for every pair of opene sets $U, V \subset X$, there exists $n\in \mathbb{N}$ such that $T^n(U)\cap V \neq \emptyset.$ When $X$ is a second countable, Baire space without any isolated points, topological transitivity is generally given by the existence of a point $x_0\in X$ with the orbit $\mathcal{O}_T(x_0) =  \mathcal{O}(x_0)=\{T^n(x_0):n\in (\mathbb{Z}_+)\mathbb{Z}\}$ dense in $X.$ Such a point is called a transitive point of $(X,T)$. The set of all transitive points of $(X,T),$ denoted by $Trans_T$ is a dense $G_{\delta}$ subset of $X.$ Also, $(X,T)$ is topologically transitive if and only if for every pair of opene sets $U$ and $V,$ the set $N(U,V)$ is infinite.
		
		\vspace{.25cm}
		
		 Let $X$ be a compact metric space. A point $x\in X$ is said to be an \emph{almost periodic point} in the (semi)cascade $(X,T)$ if for every open set $U$ containing $x,$ the set $N(x,U)$ is syndetic. In a minimal system $(X,T)$, every point $x\in X$ is an almost periodic point.
		 
		\vspace{.25cm}
		
		 The (semi)cascade $(X,T)$ is \emph{Weakly Mixing} if the product system $(X \times X, T \times T)$ is topologically transitive. Furstenberg's intersection lemma gives that for weakly mixing systems and every $n\in \mathbb{N}$, the product space $(X\times \dots \times X, T\times \dots \times T)$ (n-times) with diagonal action of $T$ is also topologically transitive. Also  for every pair of opene sets $U$ and $V$, the set $N(U,V)$ is thick.
		 
		\vspace{.25cm}
		
		 The (semi)cascade $(X,T)$ is \emph{Mixing or Topologically Mixing} if for every pair of opene  sets $U, V \subseteq X$, there exists an $N \in \mathbb{N}$ such that $T^n(U) \cap V \neq \emptyset$ for all $|n| \geq N$, i.e. the set $N(U,V)$ is cofinite.

		\vspace{.25cm}
		
		Let $X$ be a metric space. Also, let for each $n\in \mathbb{N},$ $T_n:X\to X$ be a continuous map and $T_0$ be the identity map on $X.$  The sequence $(T_n)_{n\in \mathbb{Z_+}}$ is called \emph{commuting} if $T_m\circ T_n=T_n \circ T_m$ for all $m,n\in \mathbb{Z_+}$. We can  define  \emph{universality properties} for the sequence $(T_n)_{n\in \mathbb{Z_+}}$ on $X$. 
		
\begin{Definition} \cite{Erdmann,linear chaos}	
	A point $x\in X$ is called \emph{universal} for $(T_n)$ if the orbit set $\mathcal{O}(x,T_n)=\{T_nx:n\in \mathbb{Z}_+\}$ is dense in $X.$ The sequence  $(T_n)$ is called \emph{topologically transitive} if for every pair of opene $U,V \subset X$ there exists $n\in \mathbb{N}$ such that $T_n(U)\cap V \neq \emptyset.$	\par
	
	The sequence  $(T_n)$ is \emph{weakly mixing} if the sequence $(T_n\times T_n)$ is topologically transitive i.e., for every opene $U_1,U_2,V_1,V_2 \subset X,$ there exists $n\in \mathbb{N}$ such that $T_n(U_1)\cap V_1 \neq \emptyset$ and $T_n(U_2)\cap V_2 \neq \emptyset.$
	
\end{Definition}
	
\begin{Remark}\cite{linear chaos}
		When $X$ is a second countable, Baire space then topological transitivity of $(T_n)$ is equivalent to the existence of a universal point. Moreover the set of all universal points of $(T_n)$ is a dense $G_{\delta}$ subset of $X.$ \par
		
	For every pair of opene sets $U$ and $V$, we consider the hitting time set $N_{(T_n)}(U,V)=\{n\in \mathbb{N}:T_n(U)\cap V \neq \emptyset\}$. If  $(T_n)$ is commuting and weakly mixing then for any finite collection of opene $U_1,\dots U_k,$$V_1,\dots,V_k$, the set $\displaystyle{\bigcap_{i=1}^k} N_{(T_n)}(U_k,V_k)\neq \emptyset.$\par
			  
		Note that unlike the maps, weakly mixing property of sequence $(T_n)$ may not imply that $N_{(T_n)}(U,V)$ is thick for every pair of opene  $U,V \subset X.$
\end{Remark}

	\begin{Remark}
		It is clear that the semicascade $(X,T)$ is topologically transitive or weakly mixing if and only if the sequence $\{T,T^2,T^3,\dots\}$ is topologically transitive or weakly mixing respectively. In such a case, the universal point is called a transitive point. \par 
	\end{Remark}

	\begin{Definition}\cite{Bes2, linear chaos}
		A sequence $(T_n)$ of self maps defined on a metric space $X$ is called \emph{hereditarily transitive} with respect to the sequence $(n_k)$ if for every subsequence $(n_{k_l})$ of $(n_k)$, the sequence of functions $(T_{n_{k_l}})$ is topologically transitive.\par 
		The sequence $(T_n)$ is called\emph{ hereditarily transitive} if there exists an increasing sequence $(n_k)$ of positive integers such that $(T_n)$ is hereditarily transitive with respect to $(n_k)$. 
	\end{Definition}
	
	\begin{Theorem}\label{hereditarily}\cite{Bes2,linear chaos}
		Let $X$ be a second countable metric space. Then a commuting sequence $(T_n)$ of self maps defined on $X$ is weakly mixing if and only if $(T_n)$ is hereditarily transitive.\par 
		In fact, if $(T_n)$ is hereditarily transitive with respect to an increasing sequence $(n_k)$ of positive integers then for every subsequence $(n_{k_l})$ of $(n_k)$, the sequence $(T_{n_{k_l}})$ is weakly mixing.
	\end{Theorem}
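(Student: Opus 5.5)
We prove the two directions separately and then read off the ``in fact'' statement from the proof of the converse. Throughout we use that $X$ is second countable: fix a countable base $\{W_j\}_{j\in\mathbb{N}}$ of opene sets. Any countable family of opene sets can then be replaced by basic sets.

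\emph{Weakly mixing $\Rightarrow$ hereditarily transitive.} Enumerate all pairs $(W_i,W_j)$ as $(U_1,V_1),(U_2,V_2),\dots$. We build $n_1<n_2<\cdots$ inductively so that $n_k\in\bigcap_{i\le k}N_{(T_n)}(U_i,V_i)$. The needed input is the remark above: for commuting weakly mixing $(T_n)$, every finite intersection $\bigcap_{i\le k}N_{(T_n)}(U_i,V_i)$ is nonempty. We also need this intersection to be infinite, so that $n_k>n_{k-1}$ can be chosen. To get infiniteness we shrink sets. Each $T_m$ with $m\le n_{k-1}$ is continuous and $X$ has no isolated points, so for each $i$ we can pick $m+1$ distinct points of $V_i$, which gives pairwise disjoint opene subsets $V_i^{(0)},\dots,V_i^{(M)}\subset V_i$. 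Applying the finite-intersection property to the enlarged collection of pairs $(U_i,V_i^{(r)})$ forces a single $n$ that hits several disjoint targets from the same $U_i$, but this alone does not exclude small $n$. A cleaner route is the following. If the intersection were a finite set $F$, then for each $m\in F$ we could choose a point $u_i\in U_i$ and shrink $U_i$ to a neighbourhood $U_i'$ of $u_i$ with $T_m(U_i')$ inside a small ball, and shrink $V_i$ to an opene $V_i'$ disjoint from that ball. Using commutativity is not even needed here; continuity suffices. Then $\bigcap_i N(U_i',V_i')\subset F$ would be empty, which is a contradiction. So the intersection is infinite and the induction proceeds.

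Given any subsequence $(n_{k_l})$ and basic $U,V$, say $(U,V)=(U_i,V_i)$, every $n_{k_l}$ with $k_l\ge i$ lies in $N(U,V)$. Hence $(T_{n_{k_l}})$ is transitive, and $(T_n)$ is hereditarily transitive with respect to $(n_k)$.

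\emph{Hereditarily transitive $\Rightarrow$ weakly mixing, and the ``in fact'' part.} Suppose $(T_n)$ is hereditarily transitive with respect to $(n_k)$. It suffices to show that for every subsequence $(m_l)$ of $(n_k)$, the sequence $(T_{m_l})$ is weakly mixing; the case $(m_l)=(n_k)$ then yields weak mixing of $(T_n)$. Fix opene $U_1,V_1,U_2,V_2$. First, $N_{(T_{m_l})}(U_1,V_1)$ is infinite. Otherwise we could remove its finitely many indices from $(m_l)$ and obtain a subsequence that is not transitive. Write $A=\{l: T_{m_l}(U_1)\cap V_1\ne\emptyset\}$, an infinite set. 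Apply hereditary transitivity to the subsequence $(m_l)_{l\in A}$ and the pair $U_2,V_2$. This gives $l\in A$ with $T_{m_l}(U_2)\cap V_2\neq\emptyset$ as well, so $(T_{m_l}\times T_{m_l})$ hits $(U_1\times U_2,V_1\times V_2)$. Product rectangles form a base of $X\times X$, so $(T_{m_l}\times T_{m_l})$ is transitive, which proves both claims.

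\emph{Main obstacle.} The delicate step is the first direction: upgrading nonemptiness of finite intersections of hitting sets to infiniteness, so that the diagonal sequence $(n_k)$ can be chosen strictly increasing. This is also where commutativity enters, through the cited remark. I would handle it by the shrinking argument above, using that $X$ is Hausdorff without isolated points. The point is to choose $V_i'$ disjoint from $T_m(U_i')$ for each of the finitely many small $m$; this requires $V_i\not\subset\overline{T_m(U_i)}$ for suitably shrunken sets. If $T_m$ is constant near some point this works directly. Otherwise, pick $x\in U_i$ and $y\in V_i$ with $y\neq T_mx$ (possible since $V_i$ has more than one point), and separate $y$ from $T_mx$ by disjoint neighbourhoods, shrinking $U_i$ by continuity.
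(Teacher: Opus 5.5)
The paper gives no proof of this statement; it is quoted from B\`es--Peris and Grosse-Erdmann--Peris. Your argument is correct and is essentially the standard one from those sources.

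For the forward direction you do a diagonal extraction over a countable base. For the converse you observe that hereditary transitivity forces every hitting set of every subsequence to be infinite. You then restrict to the indices hitting $(U_1,V_1)$ and apply transitivity of that subsequence to $(U_2,V_2)$, which needs neither commutativity nor second countability.

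The one place you deviate from the usual route is in getting infinitely many indices in $\bigcap_{i\le k}N_{(T_n)}(U_i,V_i)$.
\begin{itemize}
\item The usual route uses the Furstenberg-type trick. Pick $j$ with $T_j(U_1)\cap U_2\neq\emptyset$ and $T_j(V_1)\cap V_2\neq\emptyset$, and set $U=U_1\cap T_j^{-1}(U_2)$, $V=V_1\cap T_j^{-1}(V_2)$. Commutativity then gives $N_{(T_n)}(U,V)\subset N_{(T_n)}(U_1,V_1)\cap N_{(T_n)}(U_2,V_2)$. One then uses the fact that a single hitting set of a transitive sequence on a Hausdorff space without isolated points is infinite.
\item You instead apply that same shrinking argument directly to the finite intersection, taking its nonemptiness from the cited remark.
\end{itemize}
The two routes are equivalent in substance.

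Minor clean-ups:
\begin{itemize}
\item The point $y\in V_i$ must avoid all of the finitely many points $T_m u_i$, $m\in F$, with $u_i$ fixed once for all $m$. This uses that $V_i$ is infinite, not merely that it has more than one point.
\item It is enough to shrink a single pair.
\item Delete the abandoned attempt with the sets $V_i^{(r)}$ and the aside about $T_m$ being locally constant.
\end{itemize}
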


	 \subsection{Measurable Dynamics.} We recall some relevant definitions and results from measurable dynamics for which we mostly refer to  \cite{glasner, walters}. For a compact metric space $A,$ let $\mathcal{B}(A)$ or $\mathcal{B}$ be the $\sigma$-algebra of all Borel subsets of $A$ and $\mu$ be a Borel probabilty measure on $\mathcal{B}(A)$. We recall that this measure has full support if $\mu(U)>0$ for every opene $U\subset A.$ In the probability space $(A,\mathcal{B},\mu)$, a transformation $f:A\to A$ is said to be a \emph{measurable transformation} if for every $B\in \mathcal{B},$ the set $f^{-1}(B)\in \mathcal{B}$. Also, $f$ is called \emph{measure-preserving} if it is measurable and $\mu(f^{-1}(B))=\mu(B)$ for every $B\in \mathcal{B}.$ Recall that a measure-preserving system $(A,\mathcal{B},\mu, f)$ or shortly $(A,\mu,f)$ is said to be 
	\emph{ergodic}  if whenever $\Gamma$ is a strictly $f$-invariant measurable set $(\text{i.e}\ f^{-1}(\Gamma)=\Gamma)$ then either $\mu(\Gamma)=0$ or $\mu(\Gamma)=1.$ From Krylov-Bogolyubov theorem \cite{kb}, we know that for any compact metric space $A$ and a continuous function $f:A\to A$ there exists a $f$-invariant probability measure $\mu$ on $A$, which is also ergodic. We say that $f$ is \emph{uniquely ergodic} if  there exists exactly one $f$-invariant probability measure $\mu$ on $A$. A uniquely ergodic system $(A,\mu, f)$ is called \emph{strictly ergodic}  if $\mu$ has full support \cite{furs, oxotoby}. Since the support of an invariant measure is a closed set, we can conclude that the uniquely ergodic system $(A, \mu ,f)$ is minimal if and only if $\mu$ has full support on $A$. This again provides a nice connection of measurable and topological dynamics. 
	\par
	The system $(A,\mu,f)$   is \emph{weakly mixing} if for every pair of measurable sets $A$ and $B$
	\begin{center}
		$\displaystyle{\lim_{n\to \infty}}\frac{1}{n}\displaystyle{\sum_{i=0}^{n-1}}|\mu(f^{-i}(A)\cap B)-\mu (A)\mu (B)|=0$
	\end{center} 
		Analogous to the topological case it is known that in a probability space $(A,\mathcal{B},\mu)$ a measure-preserving transformation $f:A\to A$ is weakly mixing if and only if $f\times f$ is ergodic with respect to the product measure induced from $\mu$.
	\begin{Theorem}(Birkhoff Ergodic Theorem)\label{Birkhoff}\cite{walters}
		Let the system $(A,\mu,f)$ be ergodic, where $A$ is a compact metric space and $\mu$ is a Borel probability measure. Then for every $\phi \in L^1(\mu),$ and for $\mu$-almost every $a\in A,$\par 
		\begin{center}
			$\displaystyle{\lim_{n\to \infty}}\frac{1}{n}\displaystyle{\sum_{i=0}^{n-1}}\phi(f^ia)=\displaystyle{\int_A} \phi\  d\mu$ 
		\end{center} 
	\end{Theorem}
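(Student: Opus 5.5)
We prove the statement for $\phi\in L^1(\mu)$ by the standard maximal-inequality route. Write $S_n\phi=\sum_{i=0}^{n-1}\phi\circ f^i$ and
\[
\overline{\phi}(a)=\limsup_{n\to\infty}\tfrac1n S_n\phi(a),\qquad \underline{\phi}(a)=\liminf_{n\to\infty}\tfrac1n S_n\phi(a).
\]
It suffices to show $\overline{\phi}\le\int_A\phi\,d\mu\le\underline{\phi}$ $\mu$-a.e. Applying the first inequality to $-\phi$ gives the second, so we only need $\overline{\phi}\le\int\phi\,d\mu$ a.e.

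The first step is to show that $\overline{\phi}$ is $f$-invariant. Since $\frac1n S_n\phi(fa)=\frac{n+1}{n}\cdot\frac{1}{n+1}S_{n+1}\phi(a)-\frac1n\phi(a)$ and $\phi$ is finite a.e., we get $\overline{\phi}\circ f=\overline{\phi}$ a.e. For each real $c$ the set $\{\overline{\phi}>c\}$ is then invariant modulo null sets. It can be replaced by a strictly invariant set $\Gamma$, for example $\bigcap_{N}\bigcup_{n\ge N}f^{-n}$ of it, with the same measure. Ergodicity forces $\mu(\Gamma)\in\{0,1\}$, so $\overline{\phi}$ is a.e. equal to a constant $c^*\in[-\infty,\infty]$.

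The main step, and the main obstacle, is the maximal ergodic theorem: for $\psi\in L^1(\mu)$ and $E=\{a:\sup_{n\ge1}S_n\psi(a)>0\}$ we have $\int_E\psi\,d\mu\ge0$. I would prove it by Garsia's argument. Let $M_N=\max(0,S_1\psi,\dots,S_N\psi)$. Then $\psi+M_N\circ f\ge S_k\psi$ for $1\le k\le N+1$, which gives $\psi\ge M_N-M_N\circ f$ on $\{M_N>0\}$. Integrating, and using that $f$ preserves $\mu$ (so $\int M_N\circ f=\int M_N$, with $M_N\circ f\ge0$), yields $\int_{\{M_N>0\}}\psi\ge0$. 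Letting $N\to\infty$ gives the claim by dominated convergence, with dominating function $|\psi|$.

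To conclude, suppose $c^*>\int\phi\,d\mu$ and pick $c$ strictly between them. Apply the maximal theorem to $\psi=\phi-c$. Almost every point has $\overline{\phi}=c^*>c$, hence $\sup_n S_n\psi>0$, so $E$ has full measure. This gives $\int\phi\,d\mu-c\ge0$, a contradiction. The case $c^*=+\infty$ is handled by the same argument with any $c>\int\phi\,d\mu$. So $c^*\le\int\phi\,d\mu$, which completes the proof. Compactness of $A$ plays no role; only measure preservation and ergodicity are used.
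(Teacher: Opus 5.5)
Your proof is correct. The paper gives no proof of its own and simply quotes the theorem from Walters, whose argument uses the same route as yours: the maximal ergodic theorem via Garsia's lemma. For complex-valued $\phi$, apply your argument to the real and imaginary parts.

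The only difference is in organization. You use ergodicity at the outset to make $\overline{\phi}$ a.e.\ constant, so a single application of the maximal inequality finishes the proof. Walters instead first proves a.e.\ convergence for an arbitrary measure-preserving map, by showing $\liminf=\limsup$ a.e.\ on invariant sets, and only then uses ergodicity to identify the limit as $\int_A\phi\,d\mu$.
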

	\begin{Theorem}\label{oxtoby}(Oxtoby's Theorem)\cite{oxotoby}
		Let the system $(A,\mu,f)$ be uniquely ergodic, where $A$ is a compact metric space. Also let $\phi$ be a real valued continuous function defined on $A.$ Then for every  $a\in A,$  $\frac{1}{n}\displaystyle{\sum_{i=0}^{n-1}}\phi(f^ia) \to \displaystyle{\int_A} \phi\  d\mu$ uniformly as $n\to \infty.$  
		
	\end{Theorem}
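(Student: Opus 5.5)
We will prove uniform convergence by contradiction, using the Krylov--Bogolyubov compactness argument to produce a second invariant measure. Fix the unique invariant measure $\mu$ and a real valued continuous $\phi$ on $A$, and write $S_n\phi(a)=\frac{1}{n}\sum_{i=0}^{n-1}\phi(f^ia)$. If uniform convergence fails, there exist $\varepsilon>0$, integers $n_k\to\infty$ and points $a_k\in A$ with
\[
\Bigl|S_{n_k}\phi(a_k)-\int_A\phi\,d\mu\Bigr|\geq\varepsilon \quad\text{for all } k .
\]

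From these data we build the empirical measures $\nu_k=\frac{1}{n_k}\sum_{i=0}^{n_k-1}\delta_{f^ia_k}$. These are Borel probability measures on the compact metric space $A$. By the Riesz representation theorem and Banach--Alaoglu, the space of Borel probability measures is weak$^*$ compact; since $C(A)$ is separable, it is also metrizable. Passing to a subsequence, we may therefore assume $\nu_k\to\nu$ weak$^*$ for some Borel probability measure $\nu$.

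The main step is to show that $\nu$ is $f$-invariant, and this is where continuity of $f$ is used. For any $\psi\in C(A)$, the composition $\psi\circ f$ is continuous, and the sum telescopes:
\[
\Bigl|\int\psi\circ f\,d\nu_k-\int\psi\,d\nu_k\Bigr|=\frac{1}{n_k}\bigl|\psi(f^{n_k}a_k)-\psi(a_k)\bigr|\leq\frac{2\|\psi\|_\infty}{n_k}\to0 .
\]
Passing to the weak$^*$ limit gives $\int\psi\circ f\,d\nu=\int\psi\,d\nu$ for every $\psi\in C(A)$. The two finite Borel measures $f_*\nu$ and $\nu$ therefore agree on all continuous functions. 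Borel measures on a compact metric space are regular, so by uniqueness in the Riesz representation theorem $f_*\nu=\nu$, i.e.\ $\nu(f^{-1}B)=\nu(B)$ for every Borel set $B$.

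Unique ergodicity now forces $\nu=\mu$. On the other hand, $\phi$ is continuous, so
\[
\int\phi\,d\nu=\lim_k\int\phi\,d\nu_k=\lim_k S_{n_k}\phi(a_k),
\]
and every term of this sequence lies at distance at least $\varepsilon$ from $\int\phi\,d\mu$. Hence $\bigl|\int\phi\,d\nu-\int\phi\,d\mu\bigr|\geq\varepsilon$, so $\nu\neq\mu$, a contradiction. Thus $S_n\phi\to\int_A\phi\,d\mu$ uniformly on $A$, and in particular at every point $a\in A$.

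Once the weak$^*$ compactness is granted, the only point needing care is the invariance step, namely the telescoping estimate together with the passage from continuous test functions to Borel sets via Riesz uniqueness.
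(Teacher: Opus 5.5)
The paper gives no proof of this statement. It quotes the result from Oxtoby as a black box and only invokes it in the final theorem on topological mixing of $P$, applied to $\log|h|$ to get bounds on $|h_n(a)|$ that are uniform in $a$. So there is no argument of the paper's to compare yours against.

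Your proof is correct and complete. It is the standard textbook argument:
\begin{itemize}
\item form the empirical measures $\nu_k$ and extract a weak$^*$ limit $\nu$, using the separability of $C(A)$;
\item the telescoping estimate gives $\int\psi\circ f\,d\nu=\int\psi\,d\nu$ for all $\psi\in C(A)$, and Riesz uniqueness upgrades this to $f_*\nu=\nu$;
\item unique ergodicity then forces $\nu=\mu$, which contradicts $\bigl|\int\phi\,d\nu-\int\phi\,d\mu\bigr|\geq\varepsilon$.
\end{itemize}

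You are right to state explicitly that $f$ must be continuous. The statement leaves this implicit, but the paper's notion of unique ergodicity presupposes it, and your invariance step genuinely needs it.
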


	\vspace{0.1cm}
	
	\subsection{Linear Dynamics} Let $X$ be an infinite dimensional separable Banach space and $\mathcal{L}(X)$ denote the set of all bounded linear operators on $X$. Then $T\in \mathcal{L}(X)$ is \emph{hypercyclic} if there exists a vector $x\in X$ such that the orbit $\mathcal{O}(x)$ under $T$ is dense in $X.$ In \cite{birkhoff}, Birkhoff constructed the first known example of a hypercyclic operator. We  refer to \cite{linear,linear chaos} for more details on dynamics of hypercyclic operators. For an infinite dimensional separable Banach space, the Baire category arguments conduce  that transitivity here is the same as hypercyclicity. Weakly mixing  of the linear system $(X,T)$  is nicely presented by  what is known as `Hypercyclicity Criterion'.

	\begin{Definition}\cite{linear,Bes}
		Let $X$ be an infinite dimensional separable Banach space and $T\in \mathcal{L}(X)$. Then we say that $T$ satisfies the \textit{Hypercyclicity Criterion} with respect to the increasing sequence $(n_k)$ of natural numbers if there exist two dense subsets $D_1, D_2$ of $X$ and a sequence of mappings $S_{n_k}:D_2 \to X$ such that\par 
		$(i)$ $T^{n_k} x \to 0$ for every $x\in D_1.$ \par 
		$(ii)$ $S_{n_k} y \to 0$ for every $y\in D_2.$ \par
		$(iii)$ $|| T^{n_k}S_{n_k}y - y|| \to 0$ as $k\to \infty$ for every $y\in D_2.$ 
	
	We say that $T$ satisfies Hypercyclicity Criterion if there exists an increasing sequence $(n_k)$ of positive integers such that $T$ satisfies Hypercyclicity Criterion with respect to this sequence.
\end{Definition}
\begin{Theorem}(Bès - Peris Theorem \cite{Bes2})\\
	Let $X$ be an infinite dimensional separable Banach space and $T\in \mathcal{L}(X)$.. Then $(X,T)$ is weakly mixing if and only if $T$ satisfies Hypercyclicity Criterion.\par 
	Moreover, If $T$ satisfies Hypercyclicity Criterion with respect to the full sequence $(n)$ then $(X,T)$ is mixing.
	
\end{Theorem}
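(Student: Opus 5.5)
The plan is to reduce both directions of the Bès–Peris Theorem to Theorem \ref{hereditarily}, applied to the commuting sequence $T_n = T^n$. The reduction goes through one intermediate claim: $T$ satisfies the Hypercyclicity Criterion with respect to $(n_k)$ if and only if $(T^n)$ is hereditarily transitive with respect to some subsequence of $(n_k)$.

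\emph{Criterion implies weak mixing.} Suppose $T$ satisfies the Hypercyclicity Criterion with respect to $(n_k)$, and take any subsequence $(n_{k_l})$. Conditions (i)–(iii) still hold along $(n_{k_l})$. Given opene $U,V$, pick $x\in D_1\cap U$ and $y\in D_2\cap V$, and set
\[
z_l = x + S_{n_{k_l}} y .
\]
By (ii), $z_l \to x$, so $z_l\in U$ for large $l$. By linearity,
\[
T^{n_{k_l}} z_l = T^{n_{k_l}}x + T^{n_{k_l}}S_{n_{k_l}}y ,
\]
and this tends to $0 + y$ by (i) and (iii), so it lies in $V$ for large $l$. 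Hence $(T^{n_{k_l}})$ is topologically transitive, that is, $(T^n)$ is hereditarily transitive with respect to $(n_k)$. By Theorem \ref{hereditarily} (the second part), $(T^{n_k})$ is weakly mixing. Weak mixing of a subsequence of iterates gives weak mixing of $T$, because $N_{T\times T}$ contains the hitting times of $(T^{n_k}\times T^{n_k})$.

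\emph{Mixing addendum.} If the Criterion holds along the full sequence $(n)$, the same computation shows that for fixed $U,V$ we have $T^n(U)\cap V\neq\emptyset$ for all large $n$. So $N(U,V)$ is cofinite, which is mixing.

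\emph{Weak mixing implies the Criterion.} This is the main obstacle: we must produce $D_1, D_2, S_{n_k}$ from mere transitivity of $T\oplus T$. First, $X$ is separable, so fix a countable base $(W_j)$ of balls with radii tending to $0$ and centers forming a dense set. Since $T$ is weakly mixing, by Furstenberg's intersection lemma every finite intersection of sets $N(U_i,V_i)$ is nonempty, indeed infinite. Choose $n_1<n_2<\cdots$ inductively so that $n_k$ lies in
\[
\bigcap_{i,j\le k} N\bigl(B(w_i,1/k),\,B(0,1/k)\bigr)\cap N\bigl(B(0,1/k),\,B(w_j,1/k)\bigr),
\]
where the $w_i$ are the centers. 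This gives points $x_{i,k}$ near $w_i$ with $T^{n_k}x_{i,k}$ near $0$, and points $u_{j,k}$ near $0$ with $T^{n_k}u_{j,k}$ near $w_j$. The remaining difficulty is that the Criterion requires fixed vectors in $D_1$, not $k$-dependent approximants. To handle this I would avoid the explicit construction and instead verify hereditary transitivity along $(n_k)$: every subsequence $(n_{k_l})$ still meets the sets above for all finite stages, so $(T^{n_{k_l}})$ is transitive. Then I would invoke the standard fact (\cite{Bes2}) that hereditary transitivity of $(T^{n})$ along $(n_k)$ for a linear operator is equivalent to the Hypercyclicity Criterion along a further subsequence. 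Its proof takes $D_1 = D_2 =$ the dense set of centers and builds $S_{n_k}$ by a diagonal argument, refining $(n_k)$ so that, for each fixed $w_j$, suitable vectors can be chosen with errors summable in $k$.
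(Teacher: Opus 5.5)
Your argument for the easy direction (Criterion $\Rightarrow$ weak mixing, via $z=x+S_{n_k}y$ and Theorem \ref{hereditarily}) and for the mixing addendum is correct. The gap is in the converse, which is the real content of the theorem.

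Your inductive choice of $(n_k)$ only yields hereditary transitivity of $(T^n)$ along $(n_k)$. Theorem \ref{hereditarily} already gives you that for free. The passage from there to the Criterion is then handed to a ``standard fact from \cite{Bes2}'', which is precisely the B\`es--Peris theorem you are asked to prove, so as written this direction is circular.

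The sketch you offer for that fact also fails. With $D_1$ equal to an arbitrary countable dense set of centers, condition (i) demands one sequence $(n_k)$ with $T^{n_k}w\to 0$ for every center $w$. This is a property of the fixed vectors themselves, and no refinement of $(n_k)$ or summable choice of approximants near the $w_i$ can enforce it. Approximants are fine for $D_2$, because $S_{n_k}$ may depend on $k$; $D_1$ has no such freedom. For instance, $T=2B$ on $\ell^2$ is mixing and fixes $e=(1,1/2,1/4,\dots)$. If $e$, or both $w$ and $w+e$, occur among your centers, (i) fails along every sequence. Summable errors can only help if you move the points of $D_1$. You would replace $w_i$ by a limit $w_i+\sum_k c_{i,k}$, where at stage $k$ you pick $n_k$ and the corrections $c_{i,k}$ together so that $T^{n_k}$ sends the new partial sums near $0$. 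The corrections must satisfy $\|c_{i,k}\|\le\delta_k$, with $\delta_k$ fixed beforehand so that $\delta_k\max_{l<k}\|T^{n_l}\|<2^{-k}$. None of that bookkeeping is in the proposal.

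The standard way out is to take $D_1$ to be a single orbit and let commutation with $T^j$ do the work.
\begin{itemize}
\item Let $(x,y)$ be a hypercyclic vector of $T\oplus T$.
\item Since $T$ is hypercyclic, $T^m$ has dense range, so $(x,T^my)$ is again hypercyclic for $T\oplus T$ for every $m$.
\item Since $y$ has dense $T$-orbit, you can choose $z_k=T^{m_k}y\to 0$.
\item Hypercyclicity of $(x,z_k)$ then gives $n_k>n_{k-1}$ with $\|T^{n_k}x\|<1/k$ and $\|T^{n_k}z_k-x\|<1/k$.
\item Put $D_1=D_2=\{T^jx: j\ge 0\}$, which is dense with pairwise distinct points, and set $S_{n_k}(T^jx)=T^jz_k$.
\end{itemize}
For each fixed $j$, continuity of $T^j$ gives $T^{n_k}T^jx=T^j(T^{n_k}x)\to 0$, $S_{n_k}(T^jx)\to 0$, and $T^{n_k}S_{n_k}(T^jx)=T^j(T^{n_k}z_k)\to T^jx$. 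This is the argument of \cite{Bes2}. The paper itself only cites the theorem, but it runs exactly this scheme, with the weights $h_n$, in the proof of Theorem \ref{weakly mixing}.
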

	
	The Hypercyclicity Criterion can also be considered for sequence of linear continuous operators. In this case it is called `Universality Criterion'.
	
	\begin{Definition}\cite{linear chaos}
		Let $X$ be an infinite dimensional separable Banach space and $(T_n)$ be a sequence of bounded linear operators. Then we say that $(T_n)$ satisfies \emph{Universality Criterion} with respect to some increasing sequence $(n_k)$ if there exist two dense subsets $D_1, D_2$ of $X$ and a sequence of mappings $S_{n_k}:D_2 \to X$ such that\par 
		$(i)$ $T_{n_k} x \to 0$ for every $x\in D_1.$ \par 
		$(ii)$ $S_{n_k} y \to 0$ for every $y\in D_2.$ \par
		$(iii)$ $|| T_{n_k}S_{n_k}y - y|| \to 0$ as $k\to \infty$ for every $y\in D_2.$ 
	\end{Definition}
	
	The Bès - Peris theorem \cite{Bes2} also holds for commuting sequence of operators (see \cite{linear chaos}), i.e., a commuting sequence $(T_n)$ of operators is weakly mixing if and only if $(T_n)$ satisfies Universality Criterion. Moreover, in this case these two conditions are equivalent with hereditarily transitivity of the sequence $(T_n)$ of linear operators.

\bigskip

	\section{Topological Transitivity of Skew Products for (Semi)Flows}
	
We recall

	\begin{Theorem}(Rohlin's Theorem for skew products)\cite{glasner}
	Let $X\to Y$ be a factor map of dynamical systems with $X$ ergodic, then $X$ is isomorphic to a skew product over $Y.$ Explicitly, there exists a standard probability space $(U, \mathcal{U}, \rho)$ and a measurable cocycle $\alpha: \Gamma \times Y \to Aut(U,\rho)$ with $X \cong Y \underset{\alpha}{\times} (U, \rho) = (Y\times U, y\otimes \mathcal{U}, \nu \times \rho, \Gamma ),$ where $\gamma(y,u)=(\gamma y, \alpha(\gamma ,y)u).$
\end{Theorem}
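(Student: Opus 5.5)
The plan is to follow the classical Rohlin argument: realize the conditional structure of $X$ over $Y$ as a measurable field of fiber measures, and then straighten that field into a single standard space $(U,\rho)$ on which a cocycle acts. Let $\pi:X\to Y$ be the factor map, with invariant measures $\mu$ on $X$ and $\nu=\pi_*\mu$ on $Y$. Working with standard Borel spaces, I would first disintegrate $\mu$ over $\nu$ as
\[
\mu=\int_Y \mu_y\,d\nu(y),
\]
where each $\mu_y$ is a probability measure carried by the fiber $\pi^{-1}(y)$. Uniqueness of the disintegration, together with invariance of $\mu$ and equivariance of $\pi$, gives $\gamma_*\mu_y=\mu_{\gamma y}$ for $\nu$-a.e.\ $y$ and each $\gamma\in\Gamma$. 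For countable $\Gamma$ this almost-everywhere statement can be taken simultaneously over all $\gamma$.

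Next I would use ergodicity of $X$. Each fiber measure $\mu_y$ splits into an atomic part and a continuous part, so its type is recorded by the number of atoms, their masses, and the mass of the continuous part. These data are measurable functions of $y$, and by the equivariance above they are $\Gamma$-invariant. Since $Y$ is a factor of the ergodic system $X$, it is itself ergodic, so these functions are $\nu$-a.e.\ constant. Hence almost every $\mu_y$ is isomorphic to one fixed standard probability space $(U,\mathcal U,\rho)$: an interval with Lebesgue measure, together with a fixed list of weighted atoms.

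The key step, and the one I expect to be the main obstacle, is choosing these isomorphisms measurably. I need a Borel map $\phi:X\to Y\times U$ of the form $x\mapsto(\pi x,\phi_{\pi x}(x))$, where $\phi_y:(\pi^{-1}(y),\mu_y)\to(U,\rho)$ is a measure isomorphism for a.e.\ $y$. My first attempt is to embed $X$ Borel-isomorphically into $[0,1]$ and define $\phi_y$ from the fiber distribution function $x\mapsto\mu_y([0,x])$. The continuous part then goes to the interval by the quantile transform, and atoms are enumerated in order of decreasing mass, with ties broken by position. Every ingredient is a measurable function of $(y,x)$, and this yields joint measurability. Alternatively, I would invoke a measurable selection theorem (Jankov–von Neumann) to pick the fiber isomorphisms.

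Finally I define the cocycle by
\[
\alpha(\gamma,y)=\phi_{\gamma y}\circ\gamma\circ\phi_y^{-1}.
\]
Since $\gamma_*\mu_y=\mu_{\gamma y}$, each $\alpha(\gamma,y)$ lies in $Aut(U,\rho)$. Measurability of $(\gamma,y)\mapsto\alpha(\gamma,y)$ follows from that of $\phi$. The cocycle identity $\alpha(\gamma\gamma',y)=\alpha(\gamma,\gamma'y)\alpha(\gamma',y)$ follows by telescoping. Then $\phi$ carries $\mu$ to $\nu\times\rho$ by the disintegration formula, and it intertwines the action of $\gamma$ on $X$ with $(y,u)\mapsto(\gamma y,\alpha(\gamma,y)u)$. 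This gives $X\cong Y\times_\alpha(U,\rho)$ after discarding an invariant null set.
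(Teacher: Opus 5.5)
The paper gives no proof of this statement; it only quotes it from Glasner's monograph as background, and your outline is the standard proof of Rohlin's skew-product theorem: disintegrate $\mu$ over $\nu$, use ergodicity of $Y$ to make the isomorphism type of $\mu_y$ a.e.\ constant, choose the fiber isomorphisms $\phi_y$ measurably, and set $\alpha(\gamma,y)=\phi_{\gamma y}\circ\gamma\circ\phi_y^{-1}$. The argument is correct for countable $\Gamma$, which you assume and which covers $\mathbb{Z}$-actions; the one step you should write out is that $x\mapsto(\pi x,\phi_{\pi x}(x))$ is injective on a conull Borel set, so by the Lusin--Souslin theorem it is a Borel isomorphism mod $0$, and this is what gives both the joint measurability of $\alpha$ and the isomorphism $X\cong Y\times_\alpha(U,\rho)$.
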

	
We note that an exact topological analogue of the same is not possible. One can refer to \cite{ellis} for  examples and counter-examples. Hence the only analogy that can be achieved topologically is to determine the dynamical properties that are carried over to the skew product extensions.

In (\cite{GH}, Chapter 14) the skew products are defined as cylinder sets. For a self homeomorphism $\theta$ on a topological space $Y$, a map $f: Y \to \mathbb{R}$ and $X:= Y \times \mathbb{R}$ the homeomorphism $\phi: X \to X$ defined as $\phi(y,r):= (\theta(y), r +f(y) )$ defines a skew product. Further conditions for $\phi$ to be transitive are derived. In \cite{A}, transitivity of these cylinder transformations is studied when $f: Y \to \mathbb{R}^n$ for some $n \in \mathbb{N}$. 

In \cite{GW} it is shown that for compact metric space $ Y $, there exists minimal and weakly mixing homeomorphisms on $ X := Z \times Y $, of
the form $ (z,y) \to (\sigma z, h_z(y)) $, where $ (Z, \sigma) $ is an
arbitrary compact metric minimal flow and $ z \to h_z $ is a continuous map from $ Z $ to the space of homeomorphisms of $ Y $.

Building on this theory, for a locally compact group $G$, $ T $ a homeomorphism of a locally compact topological Hausdorff space $X$, a continuous $\phi: X \to G$ and the skew product $T_\phi: X \times G \to X \times G$ defined as $T_\phi(x,g) := (Tx, \phi(x)g) $, the conditions leading to topological transitivity of $(X \times G, T_\phi)$  is studied in \cite{LM}. Furthermore, a study of transitivity of such skew products when $G =\mathbb{R}$ is made in \cite{G}.

\bigskip

Thus we see that all these studies were made for  skew products extensions with locally compact or especially real fibers. We extend such a study for unbounded, not necessarily locally compact fibers.

\subsection{Skew Products (semi)flows}   There are various definitions of skew products in literature \cite{bayart,costakis,GW,GH,LM}. For our work we consider the definition of skew product from \cite{bayart}, infused with the ideas from \cite{GW} with some modifications. Though  most  of the other representations of skew products can be deduced from this definition.
	
	\begin{Definition}\label{skew prod gen}
		Let $(A,G)$ be a flow, where $A$ is a  Hausdorff space and $(G,*)$, with identity $e$, is a topological group  acting on $A.$ Also, let $X$ be a Hausdorff space and $(H, \diamond)$, with identity $\breve{e}$, be a topological group acting on $X$ giving another flow $(X,H)$.\par 
		Let $\phi : G\times A \to H$ be a continuous map satisfying the identities:
		\begin{enumerate}
			\item $\phi(e,a)=\breve{e}, a\in A$
			\item $\phi(g,g^{\prime}a) \diamond \phi(g^{\prime},a)=\phi(g*g^{\prime},a)$ for $g,g^{\prime}\in G$ and $a\in A.$
		\end{enumerate} 
		 Such a $\phi$ is called a \emph{cocycle}. \par 
		 The cocycle $\phi$ generates a group $S_G^{\phi}=\{S_g^{\phi}:g\in G\}$ with $S_g^{\phi}:A\times X \to  A\times X$ defined as $S_g^{\phi}(a,x)=(ga,\phi(g,a)x)$ for $g\in G, a\in A$ and $x\in X.$ The flow $(A\times X, S_G^{\phi})$ thus defined is called a \emph{skew product}. Here $(A,G)$ is the \emph{base} and $(X,H)$ is the \emph{fiber} of the skew product.\par
		The cocycle $\phi$ is called a \emph{coboundary} if there exists a continuous function $ F: A \to H $  such that  $ \phi(g,a) := F(ga) \diamond {F(a)}^{-1} $.
		
			\end{Definition}

			\begin{Remark}
				We note that $S_G^{\phi}$ forms a group under the composition as $(S_g^{\phi}\circ S_{g^{\prime}}^{\phi})(a,x)=S_g^{\phi}(g^{\prime}a,\phi(g^{\prime},a)x)=((g*g^{\prime})a,\phi(g*g^{\prime},a)x)$ with identity $S_{e}^{\phi}$. Also,  inverse here can be defined as $(S_g^{\phi})^{-1}=S_{g^{-1}}^{\phi}$ for every $g\in G.$\par 
				In Definition \ref{skew prod gen}, if we take $H$ to be the homeomorphism group on $X$ then it gives the skew product defined in \cite{GW}. 
			\end{Remark}
			
			\begin{Example}
			We consider  Furstenberg's example of skew product on a torus.\par Consider   the unit circle $A = \mathbb{S}^1$ bijective with $[0,1)$. Then $\mathbb{S}^1$ is also a topological group with the operation  of addition modulo $1$. Let $ f(a)= a+\alpha (\mod 1) $  where $ \alpha \in (0,1)$ is an irrational number. 
			In our Definition \ref{skew prod gen} take $G=\mathbb{Z},$ $H=\mathbb{S}^1$ and the cocycle $\phi: \mathbb{Z} \times \mathbb{S}^1 \to \mathbb{S}^1$ as 
				\begin{center}
				$\phi(n,a) = \begin{cases} \sum \limits_{k=0}^{n-1}  f^k(a) \ \ \text{if} \ \ n \geq 1,\\
					0 \ \ \text{if} \ \ n=0,\\
				 -\sum \limits_{k=1}^{-n}  f^{-k}(a)	 \ \ \text{if} \ \ n <0. \end{cases}$
			\end{center}
			
			This gives a skew product $F:\mathbb{S}^2 \to \mathbb{S}^2$ as $F(a,x)=(a+\alpha,a+x) (\mod 1)$ for every $(a,x)\in \mathbb{S}^2.$ Then for every $n\in \mathbb{Z},$ $F^n(a,x)=(f^n(x),\phi(n,a)+x)$ for $(a,x)\in \mathbb{S}^2.$ Here the base is the irrational rotation $(\mathbb{S}^1,f)$ and the fiber is the flow $(\mathbb{S}^1,H)$.

			\end{Example}
		
		
		\begin{Remark}
			The base of a skew product can also be taken to be an ergodic transformation $(A, \mathcal{B}, \mu,f)$, where the probability measure $\mu$ has full support on a compact metric space $A$.
		\end{Remark}
			
			\begin{Remark}
				In Definition \ref{skew prod gen}, we can also assume that $G$ and $H$ are topological monoids or only $G$ is topological monoid. In such a case $ S_G^{\phi}$ is a topological monoid defining the skew product semiflow $(A\times X, S_G^{\phi})$.
			\end{Remark} 
		
			\begin{Theorem}
			Let $(A,G)$ and  $(X,H)$ be two mixing flows, where $A$  and $X$ are infinite Hausdorff spaces and the topological groups $G$ and $H$ are infinite with $H$ being metrizable. \par 
			 Let $\phi: G \times A \to H$ be a cocycle defining the skew product flow $(A \times X, S^\phi_G)$. \par 
			If for every  opene $W \subset A$ and compact $K \subset G$, the set $\{\phi(g,a): a \in W, g \in G \setminus K\}$ is an unbounded subset of $H$     then $(A \times X, S^\phi_G)$ is topologically transitive.
		\end{Theorem}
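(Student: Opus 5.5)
We have to show that for opene sets $U_1\times V_1$ and $U_2\times V_2$ in $A\times X$ (basic open sets suffice) there is $g\in G$ with $ga\in U_2$ and $\phi(g,a)x\in V_2$ for some $a\in U_1$ and $x\in V_1$. The plan is to use the mixing of the base and of the fiber separately. The unboundedness hypothesis is what lets us find cocycle values lying outside a prescribed compact set. At the same time the corresponding base point must land in $U_2$.

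\textbf{Step 1 (fiber).} By mixing of $(X,H)$ there is a compact set $L\subset H$ such that $hV_1\cap V_2\neq\emptyset$ for every $h\in H\setminus L$.

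\textbf{Step 2 (base).} By mixing of $(A,G)$ there is a compact set $K\subset G$ such that $gU_1\cap U_2\neq\emptyset$ for every $g\in G\setminus K$.

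\textbf{Step 3 (combining).} We need $g\notin K$ and $a\in U_1\cap g^{-1}U_2$ with $\phi(g,a)\notin L$. Since $L$ is compact in the metrizable group $H$, it is bounded. The hypothesis applied to $W=U_1$ and $K$ gives $g\notin K$ and $a\in U_1$ with $\phi(g,a)\notin L$. However, nothing then forces $ga\in U_2$. This is the main obstacle.

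To handle it, I would apply the hypothesis to a set adapted to $U_2$ rather than to $U_1$. Using the cocycle identity with $g'=g^{-1}$ gives $\phi(g^{-1},ga)=\phi(g,a)^{-1}$. If $\phi(g,a)$ ranges in a bounded set, so does its inverse, at least for a left- or right-invariant metric; I would assume such a compatible metric, which exists by Birkhoff--Kakutani. So I would instead apply the hypothesis to $W=U_2$ with the reversed group element, giving $b=ga\in U_2$. This still does not secure $a\in U_1$ simultaneously.

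The honest route I would try is a two-parameter choice. Pick $g_0\notin K$ and $a_0\in U_1$ with $\phi(g_0,a_0)$ far outside $L$, say at distance greater than $R$ from $L$. Then use continuity of $\phi$ and of the action to find a neighborhood $W_0\subset U_1$ of $a_0$ on which $\phi(g_0,\cdot)$ stays outside $L$. Next, use mixing of the base to move further by some $g_1$ with $g_1g_0W_0\cap U_2\neq\emptyset$, with $g_1$ ranging outside a large compact set. The cocycle gives $\phi(g_1g_0,a)=\phi(g_1,g_0a)\diamond\phi(g_0,a)$, so I need $\phi(g_1,g_0a)$ not to cancel $\phi(g_0,a)$ back into $L$.

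A cleaner alternative is to choose $g_1$ first. Fix any $g_1\notin K$ with $g_1U_1\cap U_2\ne\emptyset$, and set $W=U_1\cap g_1^{-1}U_2$. Then look for $g$ near $g_1$ only through the base. This fails because unboundedness needs $g$ to vary.

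So the key step, and the one I am least sure of, is to shrink to the opene set $W=U_1\cap g^{-1}U_2$ and apply the hypothesis to it. This would require exploiting that mixing makes $W$ nonempty for all large $g$. I would try to combine the hypothesis for $W'=U_1$ with continuity in $g$, using a diagonal/Baire-type argument over a sequence of compact sets exhausting $G$ (assuming $\sigma$-compactness).

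Once a pair $(g,a)$ with $a\in U_1$, $ga\in U_2$ and $\phi(g,a)\notin L$ is found, Step 1 gives $x\in V_1$ with $\phi(g,a)x\in V_2$. Hence $S^\phi_g(U_1\times V_1)\cap(U_2\times V_2)\neq\emptyset$, which is transitivity.
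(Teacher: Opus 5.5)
Your Step 3 is a genuine gap: you never produce $g\in G\setminus K$ and $a\in U_1$ with $ga\in U_2$ and $\phi(g,a)\notin L$. The inversion identity, the composite $g_1g_0$ and the proposed Baire/exhaustion argument are only sketched. None of them can work, because the hypothesis controls $\phi(g,a)$ only for $a$ in a \emph{fixed} opene $W$. It says nothing about the points that also satisfy $ga\in U_2$. The paper's proof takes exactly the step you were wary of. It asserts that boundedness of $\{\phi(g,a): a\in A_1\cap g^{-1}(A_2),\ g\in G\setminus G_1\}$ contradicts the assumption. But $A_1\cap g^{-1}(A_2)$ varies with $g$, so this set is not of the form the assumption covers.

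In fact the statement is false as written, so no completion exists. Take $G=H=\mathbb{Z}$, $A=\mathbb{N}^{\mathbb{Z}}$ (product of discrete spaces) and $X=\{0,1\}^{\mathbb{Z}}$, both with the shift. Both are mixing, infinite, Hausdorff and without isolated points. Let $\phi(n,a)=a_n-a_0$, a continuous cocycle, so that $S^\phi_n(a,x)=(\sigma^n a,T^{a_n-a_0}x)$. For a cylinder $W$ and finite $K$, the coordinate $a_n$ is unconstrained for large $|n|$, so $\{a_n-a_0: a\in W,\ n\notin K\}$ is unbounded and the hypothesis holds. Yet $\Phi(a,x)=T^{-a_0}x$ is continuous, surjective and satisfies $\Phi\circ S^\phi_n=\Phi$. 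Hence $\Phi^{-1}(\{x_0=0\})$ and $\Phi^{-1}(\{x_0=1\})$ are disjoint invariant opene sets, and the skew product is not transitive. Concretely, every $a\in\{a_0=0\}\cap\sigma^{-n}\{a_0=0\}$ has $\phi(n,a)=0$.

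What your Steps 1, 2 and final paragraph do prove is a corrected statement with a stronger hypothesis: for all opene $W_1,W_2\subset A$ and compact $K\subset G$, the set $\{\phi(g,a): a\in W_1\cap g^{-1}W_2,\ g\in G\setminus K\}$ is not contained in any compact subset of $H$. Under it the argument is immediate, and mixing of the base becomes superfluous.
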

		\begin{proof}
			Let $A_1, A_2 \subset A$ and $U_1, U_2 \subset X$ be opene. Then there exists   compact $G_1 \subset G$ and $H_1 \subset H$ such that $g(A_1) \cap A_2 \neq \emptyset$ and $h(U_1) \cap U_2 \neq \emptyset$ for all $g \in G \setminus G_1$ and $h \in H \setminus H_1$. \par 
			The assertion holds if there exists  $a \in A$ such that $a \in A_1 \cap g^{-1}(A_2)$ and $\phi(g,a)  \notin H_1$ for some $g \in G \setminus G_1$ . Since then $S^\phi_g(A_1 \times U_1) \cap (A_2 \times U_2) \neq \emptyset$, and so $(A \times X, S^\phi_G)$ is topologically transitive.\par 
			 But if $\phi(g,a) \in H_1$, for all  $a \in A_1 \cap g^{-1}(A_2)$ with $g \in G \setminus G_1$, then $\{\phi(g,a): a \in A_1 \cap g^{-1}(A_2), g \in G \setminus G_1\}$ is bounded as $H_1$ is compact. This contradicts the assumption. 
		\end{proof}	
	
	\begin{Remark}
		We note that the theorem above holds even when $G$ and $H$ are monoids giving a skew product semiflow $(A \times X, S^\phi_G)$.
	\end{Remark}

	 In the article we are interested in transitivity property of skew product with non-compact fiber. To proceed in this direction first we will characterize transitivity of skew products with locally compact fibers.		
		
		\begin{Theorem}\label{trans1}
			Let the semicascade $(A,f)$ be minimal, where $A$ is a compact metric space. Also, let $(X,d)$ be a locally compact metric space and $H$ be any topological subgroup of the homeomorphism group on $X$ with composition as the group operation, endowed with the compact open topology giving the flow $(X,H)$.\par  
			Suppose that $\phi:\mathbb{Z}_+ \times A\to H$, defined by  $\phi(n,a)=\phi_n(a):=\phi(f^{n-1}(a))\circ  \dots  \circ \phi(a)$ for all $n \in \mathbb{Z}_+$ is a cocycle defining the skew product $F:A\times X\to A\times X$ as $F(a,x)=(f(a),\phi(a)x)$ for every $(a,x)\in A\times X$. \par
			 If there exists some $a\in A$ such that the set $\{\phi(f^n(a)):n\in \mathbb{Z_+}\}$ is a commuting class of maps and the sequence of maps $(\phi_n(a))  $ is weakly mixing, then the skew product $(A\times X,F)$ is topologically transitive.
		\end{Theorem}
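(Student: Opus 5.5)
The plan is to work along the orbit of the distinguished point $a_0\in A$ given in the hypothesis and reduce transitivity of $F$ to transitivity of the fiber sequence $(\phi_n(a_0))$.

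Fix opene sets $A_1,A_2\subset A$ and $U_1,U_2\subset X$. Since $F^n(a,x)=(f^n(a),\phi_n(a)x)$, it suffices to find $n$ and a point $b\in A_1$ with $f^n(b)\in A_2$ and $\phi_n(b)(U_1)\cap U_2\neq\emptyset$.

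\textbf{Step 1.} By minimality the orbit of $a_0$ is dense, so I choose $m$ with $f^m(a_0)\in A_1$ and take $b=f^m(a_0)$.

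\textbf{Step 2.} The cocycle identity gives $\phi_{n+m}(a_0)=\phi_n(f^m a_0)\circ\phi_m(a_0)$. Every factor is invertible in $H$, so
$$\phi_n(f^m a_0)=\phi_{n+m}(a_0)\circ\phi_m(a_0)^{-1}.$$
Put $U_1'=\phi_m(a_0)^{-1}(U_1)$, which is opene because $\phi_m(a_0)$ is a homeomorphism. Writing $k=n+m$, the problem becomes: find $k>m$ such that
$$f^k(a_0)\in A_2 \quad\text{and}\quad \phi_k(a_0)(U_1')\cap U_2\neq\emptyset.$$

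\textbf{Step 3.} This is the main obstacle. I must show that the hitting set $N_{(\phi_k(a_0))}(U_1',U_2)$ meets the return set $R=\{k: f^k(a_0)\in A_2\}$ beyond $m$. Since $a_0$ is almost periodic in the minimal system, $R$ is syndetic with some gap bound $M$. So for every $k$ there is $j\le M$ with $k+j\in R$. The cocycle identity again gives $\phi_{k+j}(a_0)=g\circ\phi_k(a_0)$, where $g=\phi_j(f^k a_0)$ is a composition of at most $M$ maps from $\phi(A)$.

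Here I use compactness. Since $A$ is compact and $\phi$ is continuous into $H$ with the compact-open topology, $\phi(A)$ is compact, and hence so is the family $K_M$ of all compositions of at most $M$ of its elements. Local compactness of $X$ makes composition and inversion in $H$ behave well, so $\{g^{-1}:g\in K_M\}$ is also compact and equicontinuous on compact sets.

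Pick $x_2\in U_2$ and a small compact ball around it. Equicontinuity yields finitely many opene sets $V_1,\dots,V_r$ with the following property: every $g\in K_M$ has some $V_i\subset g^{-1}(U_2)$. Since a weakly mixing commuting sequence hits any finitely many pairs simultaneously (the Remark following the universality definitions), I can apply this to the pairs $(U_1',V_i)$, $i=1,\dots,r$. To get $k>m$, I would use hereditary transitivity (Theorem \ref{hereditarily}) along a tail of the sequence $(n_k)$. This should produce $k$ such that $\phi_{k+j}(a_0)(U_1')\cap U_2\neq\emptyset$ for the $j$ with $k+j\in R$.

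The delicate point is that the relevant $g$, and so the relevant $V_i$, depends on $k$. This is why I need all $r$ pairs hit at once, and why I need the commuting hypothesis (so that the Remark applies) and local compactness (so that $K_M$ is equicontinuous). If the equicontinuity step fails, I would instead try to choose the subsequence inside $R$ directly, using Theorem \ref{hereditarily}.
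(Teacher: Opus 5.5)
Your proof is correct. Its overall architecture is the paper's. You pick a point $f^m(a_0)\in A_1$ on the orbit. You use that the return times of $a_0$ to $A_2$ form a syndetic set with gap $M$. You then show that the fiber sequence hits at some $k>m$, and at $k+j$ for every $j\le M$, by hitting finitely many auxiliary opene sets at once (the Remark on commuting weakly mixing sequences, applied along a tail via hereditary transitivity) and correcting by $\phi_j(f^k a_0)$. Because your $V_1,\dots,V_r$ work for every correction map, the simultaneous hit at $k$ in fact gives $\{k,\dots,k+M\}\subset N_{(\phi_n(a_0))}(U_1',U_2)$, which is exactly the thickness the paper proves.

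The real difference is how the correction maps are controlled. For each length $l$, the paper extracts a subsequence $(r_k)$ along which $\phi_m(f^{r_k}(a))\to g_m$ for $m\le l$. It then chooses $W_m$ with compact closure inside $g_m^{-1}(W)$ and uses uniform convergence on $\overline{W_m}$ for large $k$. You instead take a finite subcover of the compact family of corrections, so your auxiliary sets do not depend on $k$ or on the orbit, and no convergent subsequences are needed. It suffices to take $K_M=\bigcup_{j=0}^{M}\phi_j(A)$, which is compact because each $\phi_j$ is continuous.

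Your pullback $U_1'=\phi_m(a_0)^{-1}(U_1)$ is also slightly cleaner than the paper's pushforward $V'=\phi_p(a)V$. That pushforward silently needs $\phi_p(a)$ to commute with $\phi_n(f^p(a))$.

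Two small corrections:
\begin{itemize}
\item The equicontinuity/inversion step is unnecessary. Continuity of inversion comes from the hypothesis that $H$ is a topological group, not from local compactness alone. For each $g_0\in K_M$, choose an opene $W$ with compact closure and $g_0(\overline{W})\subset U_2$. The sets $\{g\in H: g(\overline{W})\subset U_2\}$ are compact-open neighbourhoods covering $K_M$, and a finite subcover yields the $V_i$.
\item $R$ is syndetic by minimality and compactness of $A$, even when $a_0\notin A_2$.
\end{itemize}
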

		
		\begin{proof}
			Let $A_1,A_2$ be opene in $A$ and $U,V$ opene in $X.$ For  $a \in A$ as in assumption, there exists $p,q\in \mathbb{N}$ such that $f^p(a)\in A_1$ and $f^{p+q}(a)\in A_2.$ Since $f^{p+q}(a)$ is an almost periodic point, the set $N(f^{p+q}(a),A_2)$ is syndetic with bounded gap $M$. Consequently the set $\{n\in \mathbb{Z}: n>q,\  f^{n+p}(a)\in A_2\}$ is also a syndetic set with bounded gap $M.$\par 
			Let $V^{\prime}=\phi(f^{p-1}(a))\circ\dots\circ\phi(a)V$, then $V^{\prime}$ is an opene subset of $X.$ We claim that $N_{(\phi_n(a))}(U,V^{\prime})$ is thick.\par
			Note that since $(\phi_n(a))$ is weakly mixing, from Theorem \ref{hereditarily}, $(\phi_n(a))$ is hereditarily transitive with respect to some sequence $(n_k)$. Now for every $m\in \mathbb{N}$ as $\phi_m(A)$ is compact, the sequence $(\phi_m(f^{n_k}(a)))$ has a convergent subsequence in $\phi_m(A).$ \par Let $l\in \mathbb{N}$. Then for every $m\in \{1,\dots,l\}$, we get a common subsequence $(r_k)$ of $(n_k)$ such that $(\phi_m(f^{r_k}(a)))$ converges to $g_m$(say). Again using Theorem \ref{hereditarily}, for this subsequence $(r_k)$ we can conclude that $(\phi_{r_k}(a))$ is weakly mixing.\par 
			
			Choose $\varepsilon>0$ and a point $y\in V^{\prime}$ such that $B(y,2\varepsilon)\subset V^{\prime}$ and also take $W=B(y,\varepsilon).$ Since $X$ is locally compact, there exist opene sets $W_m$, with $m\in \{1,\dots,l\}$ such that $\overline{W_m}$ is compact and $\overline{W_m}\subset g_m^{-1}(W)$. Then there exists $k_0\in \mathbb{N}$ such that $d(\phi_m(f^{r_k}(a))(z),g_m(z))<\varepsilon$ for every $z\in \overline{W_m}$, $k\geq k_0$ and $m\in \{1,\dots,l\}.$
			Now we have
			$$N_{(\phi_n(a))}(U,W)\bigcap (\displaystyle{\bigcap_{m=1}^l}N_{(\phi_n(a))}(U,W_m))\neq \emptyset$$
			We can choose $r_k\in N_{(\phi_n(a))}(U,W)\bigcap (\displaystyle{\bigcap_{m=1}^l}N_{(\phi_n(a))}(U,W_m))$ with $k>k_0.$ Then for every $m\in \{1,\dots,l\}$, there exists $u_{m}\in U$ such that $\phi_{r_k}(a)(u_{m})\in W_m.$ 
			
		Consequently, we have 		
		$ d(\phi_m(f^{r_k}(a))((\phi_{r_k}(a)(u_{m})),\ y)\ <$ 
		
		 $ d(\phi_m(f^{r_k}(a))(\phi_{r_k}(a)(u_{m})),$ $g_{m}(\phi_{r_k}(a)(u_{m})))+d(g_{m}(\phi_{r_k}(a)(u_{m})),y)<2\varepsilon.$\par
		 
		  Hence $\phi_{r_k+m}(a)(u_{m})\in V^{\prime}.$ Consequently, $r_k+m\in N_{(\phi_n(a))}(U,V^{\prime})$ for every $m\in \{0,\dots,l\}$ and so $N_{(\phi_n(a))}(U,V^{\prime})$ is thick.\par
			Now clearly there exists some $n_0\in \{n\in \mathbb{N}: f^{n+p}(a)\in A_2\}$ such that $n_0+p\in N_{(\phi_n(a))}(U,V^{\prime}).$ Consequently, we can choose $u\in U$ such that $\phi_{n_0}(f^p(a))(u)\in V.$ Hence $F^{n_0}(f^p(a),u)\in  A_2\times V.$ Therefore $(A\times X,F)$ is topologically transitive.			
		\end{proof}

Now the natural question is can you characterize a transitive product in this way, where the fiber is not locally compact. In the next section, we will see such a case when the fiber is an infinite dimensional Banach space. For particular cases we can have that the assertion in Theorem \ref{trans1} holds  true even when $X$ is not locally compact.

\subsection{Skew Product (semi)cascades}:	

In what follows we are particularly interested in $(\mathbb{Z}_+)\mathbb{Z}-$ cocycles defining skew product (semi)cascades.

			 We accordingly modify Definition \ref{skew prod gen}.
			
			\begin{Definition}\label{skew prod def gen2}
				Let $(A,f)$ and $(X,T)$ be  (semi)cascades, where $A$ and $X$ are  Hausdorff spaces.  Then for any continuous function $\tilde{h} : A \to (\mathbb{Z}_+)\mathbb{Z}$, we define a \emph{cocycle} $h: (\mathbb{Z}_+)\mathbb{Z} \times A \to (\mathbb{Z}_+)\mathbb{Z}$ as:
				\begin{center}
					$h(n,a) = \begin{cases} \sum \limits_{k=0}^{n-1}  \tilde{h}(f^k(a)) \ \ \text{if} \ \ n \geq 1,\\
						0 \ \ \text{if} \ \ n=0,\\
						-\sum \limits_{k=1}^{-n}  \tilde{h}(f^{-k}(a)) \ \ \text{if} \ \ n <0. \end{cases}$
				\end{center}
				
				 satisfying the cocycle identities.\par
				
				Define  $F:A\times X \to A\times X$ by $F(a,x)=(f(a),T^{h(1,a)}(x))$ for every $a\in A$ and $x\in X.$\par 
				Then $(X,F)$ is a \emph{skew product (semi)cascade}.
					\end{Definition}
		
\begin{Remark}
	 For every $n\in \mathbb{Z}$, the $n$th iterates are defined by $F^n(a,x)=(f^n(a),T^{h(n,a)}(x))$ for every $(a,x)\in A\times X$.
\end{Remark}

We note that a coboundary here is a map $g:  A \to (\mathbb{Z}_+)\mathbb{Z}$ for which $h(n,a) = g(f^n(a)) - g(a),$ for every $a \in A$ and $n \in   (\mathbb{Z}_+)\mathbb{Z}$.

\begin{Theorem}\label{coboundary}
	Let the cascade $(A,f)$ be minimal, where $A$ is a compact metric space. Also let $X$ be any complete metric space and $T:X\to X$ be a homeomorphism giving the cascade $(X,T).$ \par
	
	Let $h:\mathbb{Z} \times A \to \mathbb{Z}$ be the cocycle as Definition \ref{skew prod def gen2}, giving the skew product $(A \times X ,F)$ with 
		$$F(a,x)=(f(a), T^{h(1,a)}x) \ \text{for every} \ (a,x)\in A\times X.$$
		  Then  the following conditions are equivalent:
	\begin{enumerate}
		\item The cocycle $h$ is coboundary i.e., there exists a continuous function $g:A\to \mathbb{Z}$ such that $h(n,a)=g(f^n(a))-g(a)$ for every $n\in \mathbb{Z}$ and $a\in A.$
		\item There exists $a\in A$ such that the set $\{h(n,a):n\in \mathbb{Z}\}$ is bounded.
	\end{enumerate}
\end{Theorem}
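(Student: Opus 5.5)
This is the integer-valued version of the Gottschalk--Hedlund theorem. The direction (1) $\Rightarrow$ (2) is immediate. If $h(n,a)=g(f^n(a))-g(a)$ with $g$ continuous on the compact space $A$, then $g$ is bounded, say $|g|\le M$. Hence $|h(n,a)|\le 2M$ for every $a$ and every $n\in\mathbb{Z}$.

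For (2) $\Rightarrow$ (1), fix $a_0$ with $\{h(n,a_0):n\in\mathbb{Z}\}$ bounded. The plan is to build $g$ on the orbit and then extend by continuity.

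\textbf{Step 1: local constancy.} Since $\tilde h=h(1,\cdot)$ is a continuous map from $A$ into the discrete space $\mathbb{Z}$, it is locally constant. As $A$ is compact, $\tilde h$ takes finitely many values, and there is $\delta>0$ such that $d(a,b)<\delta$ implies $\tilde h(a)=\tilde h(b)$.

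\textbf{Step 2: definition on the orbit.} Define $g_0(f^n(a_0))=h(n,a_0)$. This is well defined: if $f^n(a_0)=f^m(a_0)$ with $n\neq m$, then $A$ is a finite periodic orbit by minimality. That case is trivial, since boundedness forces $h(p,a_0)=0$ for the period $p$, and the formula then gives a well-defined $g$ on a finite discrete set. By the cocycle identity, $g_0(f(b))-g_0(b)=\tilde h(b)$ on the orbit, and $g_0$ is bounded.

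\textbf{Step 3: uniform continuity of $g_0$ on the orbit.} This is the main obstacle. I would use the classical Gottschalk--Hedlund argument via the skew product $\Phi(a,t)=(f(a),t+\tilde h(a))$ on $A\times\mathbb{Z}$. Let $K$ be the closure of the $\Phi$-orbit of $(a_0,0)$, which is $\{(f^n a_0,h(n,a_0))\}$. It is compact because the second coordinates are bounded, and it is $\Phi$-invariant.

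Take a minimal subset $M\subset K$. Its projection to $A$ is closed and invariant, hence all of $A$ by minimality of $f$. One then shows that each fiber of $M$ over $A$ is a single point. If $(a,t),(a,t')\in M$, the translate $M+(t'-t)$ in the second coordinate is again minimal (translation commutes with $\Phi$) and meets $M$, so it equals $M$. Iterating gives $M+k(t'-t)=M$ for all $k$, which contradicts boundedness unless $t=t'$.

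Thus $M$ is the graph of a function $g:A\to\mathbb{Z}$ with closed graph in $A\times(\text{finite set})$, so $g$ is continuous. Invariance of the graph under $\Phi$ gives $g(f(a))=g(a)+\tilde h(a)$. Iterating this identity yields $h(n,a)=g(f^n a)-g(a)$ for all $n\in\mathbb{Z}$ (using invertibility for negative $n$), which is (1).

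The delicate points are the compactness of $K$, which holds since $\mathbb{Z}$-values are bounded and $A$ is compact, and the translation-invariance argument for single-valued fibers. Step 1 is in fact not needed once the minimal-set route is used. Completeness of $X$ plays no role in this equivalence.
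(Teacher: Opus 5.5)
Your proof is correct and follows essentially the paper's argument: the paper also passes to the skew product $G(b,r)=(f(b),h(1,b)+r)$ on $A\times\mathbb{Z}$, takes a minimal subset of the compact orbit closure of $(a,0)$, shows its fibers are singletons by translating in the $\mathbb{Z}$-coordinate, and reads $g$ off the resulting graph. Your version is slightly more complete, since you explicitly justify continuity of $g$ (closed graph into a finite set), which the paper leaves implicit; your Steps 1--2 are correct but not used.
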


\begin{proof}
	We first assume that the cocycle $h$ is a coboundary. Then there exists a continuous function $g:A\to \mathbb{Z}$ such that $h(n,a)=g(f^n(a))-g(a)$ for every $n\in \mathbb{Z}$ and $a\in A.$ Since $A$ is compact, $g(A)$ is finite. Consequently, the set $\{h(n,a):n\in \mathbb{Z}\}$ is bounded for every $a\in A.$\par
	Conversely, suppose that there exists some $a\in A$ such that the set $H = \{h(n,a):n\in \mathbb{Z}\}$ is bounded and hence finite. 
	
	If $H =\{0\}$ then vacuously $h$ is a coboundary. 
	
	Else, let us consider a function $G:A\times \mathbb{Z} \to A\times \mathbb{Z}$ by $G(b,r)=(f(b),h(1,b)+r)$ for every $(b,r)\in A\times \mathbb{Z}.$ Clearly for every $n\in \mathbb{Z}$, $G^n(a,0)=(f^n(a),h(n,a))$. Now from compactness of $f(A),$ and finiteness of $H$ we can conclude that $\overline{\mathcal{O}_G(a,0)}$ is  compact in $A\times \mathbb{Z}.$ Hence there exists a minimal set $M \subset \overline{\mathcal{O}_G(a,0)}$, which gives a minimal cascade $(M,G)$.\par
	Let $(a^*,r) \in M$. Then since $O_f(a^*)$ is dense in $A$, we have $\pi_1(M)=A$. 
	We claim that for every $b\in A,$ $\pi_1^{-1}(b)\cap M$ is a singleton set. On the contrary suppose that there is some $b\in A$ such that $\pi_1^{-1}(b)\cap M$ contains more than one point. That means $(b,m),(b,m+r)\in M$ for some $m,r(\neq 0)\in \mathbb{Z}.$ Let us consider the translation map $T_r:A\times \mathbb{Z} \to A\times \mathbb{Z}$, defined by $T_r(c,n)=(f(c),n+r)$ for every $(c,n)\in A\times \mathbb{Z}.$ Since $M$ is minimal, $(b,m+r)\in \overline{\mathcal{O}_G(b,m)}$. Then one can easily check that $(b,m+2r)\in T_r(\overline{\mathcal{O}_G(b,m)})\subset \overline{\mathcal{O}_G(b,m+r)}\subset M.$ In this way by induction we can prove that $(m+kr)\in M$ for every $k\in \mathbb{N}.$ But this contradicts the fact that $H$ is finite. \par  
	Define $g: A \to \mathbb{Z}$ such that $g(b)$ is the unique integer for which $(b,g(b)) \in M$. Then $G(b,g(b)) = (f(b),h(1,b)+g(b)) \in M$. But we also have $(f(b), g(f(b)) \in M$ and since each slice is a singleton, we must have $h(1,b) = g(f(b)) - g(b)$ for every $b \in A$. 
	Hence, $h(n,b) = \sum \limits_{i=0}^{n-1} h(1,f^i(b)) = \sum \limits_{i=0}^{n-1} \{g(f^{i+1}(b)) - g(f^i(b))\} = g(f^n(b)) - g(b)$ for $n > 0$. Also for $n<0$, $h(n,b) = -\sum \limits_{i=1}^{-n} h(1,f^{-i}(b))=\sum \limits_{i=1}^{-n} \{g(f^{-i}(b))-g(f^{-i+1}(b))\}=g(f^n(b)) - g(b)$. Thus, the cocycle $h$ is  a coboundary.	
\end{proof}

\begin{Theorem}
	Let $(A,f)$ be a minimal cascade, where $A$ is a compact metric space and $(X,T)$ be a cascade, where $X$ is a complete metric space.\par  Let $h:\mathbb{Z} \times A \to \mathbb{Z}$ be the cocycle as Definition \ref{skew prod def gen2}, giving the skew product $(A \times X ,F)$ with 
	$F(a,x)=(f(a), T^{h(1,a)}x) \ \text{for every} \ (a,x)\in A\times X.$
	Further, assume that
	
	(1)  $(X,T)$ is weakly mixing 
	
	(2) there exists   $a \in A$ for which the set $\{h(n,a):n\in \mathbb{Z}\}$ is unbounded.
	
	Then the skew product cascade $(A\times X, F)$ is topologically transitive.
\end{Theorem}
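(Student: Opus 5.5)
The plan is to reduce transitivity of $F$ to a combinatorial statement about the integer sequence $n\mapsto h(n,a)$. Since $F^n(a,x)=(f^n(a),T^{h(n,a)}x)$, for opene $A_1,A_2\subset A$ and $U,V\subset X$ it suffices to find $a\in A_1$ and $n\in\mathbb{Z}$ such that
$$f^n(a)\in A_2 \quad\text{and}\quad h(n,a)\in N_T(U,V).$$

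\textbf{Step 1: uniform facts about the base and the cocycle.} Since $\tilde h:A\to\mathbb{Z}$ is continuous and $A$ is compact, $\tilde h$ takes finitely many values, so $|\tilde h|\le C$ for some $C$. Hence $|h(n+1,a)-h(n,a)|\le C$ for all $n$ and $a$. Since $(A,f)$ is minimal on a compact metric space, there is $M$ such that every orbit segment $\{f^{k}(b),\dots,f^{k+M}(b)\}$ meets $A_2$, in both time directions (take a finite subcover of $A$ by the sets $f^{-j}(A_2)$, and similarly by the sets $f^{j}(A_2)$). Consequently, along any orbit the set $R_a=\{n: f^n(a)\in A_2\}$ has gaps at most $M$ on each half-line. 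Between consecutive elements of $R_a$, the value $h(\cdot,a)$ therefore changes by at most $CM$.

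\textbf{Step 2: unboundedness at every point, in particular at some $a\in A_1$.} By Theorem \ref{coboundary}, if $\{h(n,b):n\in\mathbb{Z}\}$ were bounded for some $b$, then $h$ would be a coboundary. A coboundary is bounded on every orbit, contradicting hypothesis (2). So $\{h(n,a):n\in\mathbb{Z}\}$ is unbounded for every $a$; fix $a\in A_1$. Then on at least one half-line ($n\ge 0$ or $n\le 0$) the values $h(n,a)$ are unbounded above, or unbounded below. Treat the case "unbounded above along $n\ge 0$"; the other three cases are symmetric.

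\textbf{Step 3: long intervals in $N_T(U,V)$ on both sides.} Since $T$ is weakly mixing, $N_T(U,V)\cap\mathbb{Z}_+$ is thick (the standard fact recalled in the preliminaries). Applying the same fact to the pair $(V,U)$ and using $N_T(U,V)=-N_T(V,U)$ for a homeomorphism, $N_T(U,V)$ also contains arbitrarily long intervals of negative integers. By intersecting with $N_T(U',V)$ for shrinking subsets (or directly from the Furstenberg intersection lemma), these intervals can moreover be taken arbitrarily far from $0$. I expect this "arbitrarily far out in the required direction" point to be the main technical obstacle. If direct thickness does not give it, I would first try replacing $U$ by $T^{-k}U\cap U$-type sets: for large $k$ this shifts the hitting set by $k$ while the new set still contains long intervals.

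\textbf{Step 4: sweeping argument.} Let $n_0=\min(R_a\cap\mathbb{Z}_+)\le M$. Then $|h(n_0,a)|\le CM$. The values $\{h(n,a):n\in R_a,\ n\ge 0\}$ start within $CM$ of $0$, are unbounded above, and consecutive ones differ by at most $CM$. So every interval of length greater than $CM$ lying in $[CM,\infty)$ contains some $h(n,a)$ with $n\in R_a$, because the sequence cannot jump over such an interval. By Step 3, choose an interval $I\subset N_T(U,V)\cap[CM,\infty)$ of length greater than $CM$, and pick $n\in R_a$ with $h(n,a)\in I$. Then there is $u\in U$ with $T^{h(n,a)}u\in V$, so
$$F^n(a,u)\in (A_2\times V)\cap F^n(A_1\times U).$$
This proves topological transitivity of $(A\times X,F)$.
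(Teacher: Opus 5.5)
Your proof is correct and is essentially the paper's argument. Both use syndetic return times of the base orbit to $A_2$, increments of $h(\cdot,a)$ bounded by $\max|\tilde h|$, and long blocks in the thick set $N_T(U,V)$. Both then observe that the values of $h$ at successive return times cannot jump over a block longer than their maximal step (the paper's window $[t-MN,t+MN]$).

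Two simplifications are available. First, the ``main obstacle'' in Step 3 is not one. If $\{m,\dots,m+K+\ell\}\subset N_T(U,V)$ with $m\ge 0$, its last $\ell+1$ elements already lie in $[K,\infty)$. This is exactly why the paper takes a block of length $h(1,f^p(a))+3MN$ and works in its upper part, so you should drop the speculative fixes via $T^{-k}U\cap U$ or shrinking $U'$. Second, Step 2 does not need Theorem~\ref{coboundary}: the paper moves the given point to $f^p(a)\in A_1$ and uses $h(n,f^p(a))=h(n+p,a)-h(p,a)$.

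One caveat applies equally to the paper's ``other cases''. If the unboundedness occurs only along $n\le 0$, your symmetric argument gives $F^{n}(A_1\times U)\cap(A_2\times V)\neq\emptyset$ with $n<0$. However, transitivity of a cascade is defined in the preliminaries with $n\in\mathbb{N}$. You can rule these cases out. Suppose $\{h(n,b):n\ge 0\}$ were bounded for some $b$. Then the $\omega$-limit set of $(b,0)$ under $G(b,r)=(f(b),h(1,b)+r)$ would be a nonempty compact $G$-invariant subset of $A\times\mathbb{Z}$. Running the proof of Theorem~\ref{coboundary} on a minimal subset of it shows that $h$ is a coboundary, contradicting (2). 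Hence the forward values are unbounded at every point. Only the two cases along $n\ge 0$ are then needed, with the unbounded-below case using the negative blocks you obtain from $N_T(U,V)=-N_T(V,U)$.
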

\begin{proof}
	Let $A_1,A_2$ be opene in $A$ and $U,V$ opene in $X.$ From minimality of $(A,f)$, there exists $p,q\in \mathbb{Z}$ such that $f^p(a)\in A_1$ and $f^{p+q}(a)\in A_2.$ Also, Since $f^{p+q}(a)$ is an almost periodic point, the set $N(f^{p+q}(a),A_2)$ is syndetic with bounded gap $M$. Consequently the set $\{n\in \mathbb{Z}: f^{n+p}(a)\in A_2\}$ is also a syndetic set with bounded gap $M.$ \par 
	On the other hand $\tilde{h}(A)$ is compact, since $\tilde{h}$ is continuous. Hence there exists $N\in \mathbb{N}$ such that $|h(n+1,b)-h(n,b)|< N$ for every $n\in \mathbb{Z}$ and $b\in A.$ Also, as $(X,T)$ is weakly mixing, we have $N(U,V)\cap \mathbb{Z_+}$ and $N(U,V)\cap \mathbb{Z_-}$ both are thick sets. Now we have following cases:\par 
	We first take the case when the set $\{h(n,a):n\in \mathbb{Z}_+\}$ is unbounded above. From definition of $h$, one can observe that the set $\{h(n,f^p(a)):n\in \mathbb{N}\}$ is also unbounded above. Let us take $r=h(1,f^p(a))+3MN$. Since $N(U,V)\cap \mathbb{Z_+}$ is thick, there exists some $m\in \mathbb{N}$ such that $\{m,m+1,\dots, m+r\}\subset N(U,V).$ Consider $t=m+h(1,f^p(a))+MN+1.$ Then clearly $h(1,f^p(a))< t-MN$ and $t-MN,
	t-MN+1,\dots,t,t+1,\dots,t+MN \in N(U,V).$ Now as $|h(n+1,f^p(a))-h(n,f^p(a))|< N$ for every $n\in \mathbb{N}$, and the set $\{h(n,f^p(a)):n\in \mathbb{N}\}$ is unbounded above, so there is
	some $n_0\in \mathbb{N}$ such that $h(n_0,f^p(a))\in [t,t+N]\cap \mathbb{N}$
	and consequently, $h(n_0,f^p(a)),h(n_0+1,f^p(a)),\dots, h(n_0+M,f^p(a))\in
	[t-MN,t+MN]\cap \mathbb{N}$. Then we get some $r\in \{n\in \mathbb{N}:f^{n+p}(a)\in A_2\}$ such that $h(r,f^p(a))\in [t-MN,t+MN]\cap
	\mathbb{N}$. Therefore $F^r(f^p(a),u)\in A_2\times V$ for some $u\in U.$
	This implies that $(A\times X, F)$ is topologically transitive.
	
	 In a similar way one can prove the other cases also.
 \end{proof}
 
 \begin{Remark}
 	From Theorem \ref{coboundary}, we can conclude that the above theorem is also true if the cocycle $h$ is not a coboundary.
 \end{Remark}
 
\bigskip

 \section{Transitivity properties of skew products for linear operators}
 
 In this section we follow up with our study for skew products in a linear setting. Since transitivity properties for linear operators are often expressed in terms of hypercyclicity criterion, most of our results of the previous section can be presented in more particular form in the linear setting. 
 
  We review the following definition of skew product from \cite{bayart} and this is an example of skew product, where the fiber is not locally compact. They consider the base of the skew product to be ergodic. In this section we assume that $\mu$ is a Borel probability measure with full support.
	
 	\begin{Definition}\label{skew def}\cite{bayart}
		Let $(A,f)$ be a semicascade, where $A$ is a compact metric space and $h:A\to \mathbb{C}$ be a continuous function. Also, let $X$ be an infinite dimensional separable complex Banach space and $T\in \mathcal{L}(X)$. Then the skew product $P:A\times X \to A\times X$ is defined as $P(a,x)=(f(a),h(a)Tx)$ and the $n$th iterates of $P$ is\par 
	\noindent	$P^n(a,x)=(f^n(a),h_n(a)T^n(a))$, where $h_n(a):=h(f^{n-1}(a))h(f^{n-2}(a))\dots h(a). $
		 
	\end{Definition}
	\begin{Remark}
		For each $i\in \mathbb{Z_+},$ let $H_i = \{cT^i: c\in \mathbb{C}\}$. Clearly $H_i$ is a topological monoid endowed with the pointwise convergence topology. One can easily check that if we take $H=\displaystyle{\coprod_{i\in \mathbb{Z}_+}}\{cT^i: c\in \mathbb{C}\}$, the topological disjoint union of $H_i,$ where $i\in \mathbb{Z_+}$ and the cocycle $\phi: \mathbb{Z}_+ \times A \to H$ defined as $\phi(n,a)=h_n(a)T^n$ then from Definition \ref{skew prod  gen}, we get  Definition \ref{skew def}, i.e $P^n(a,x)=(f^n(a),\phi(n,a)x)$ for every $n\in \mathbb{Z}_+, a\in A, x\in X.$ Note that here $H$ is a topological monoid.
	\end{Remark}

	We recall the following theorem from \cite{bayart}, where the authors discussed certain hypercyclicity type criterion for the skew product defined above. 
	
	\begin{Theorem}\cite{bayart}\label{hc}
		Let the system $(A,\mu,f)$ be ergodic, where $A$ is a compact metric space and $\mu$ is a Borel probability measure on $A$. Let $h:A\to \mathbb{C}$ be a continuous function with $\gamma= \displaystyle{\int_A} \log |h| d\mu$ finite. \par 
		Also let $X$ be an infinite dimensional separable complex Banach space and $T\in \mathcal{L}(X)$. Then the skew product system  $(A\times X, P)$,  is topological transitive if there exist two dense subsets $D_1, D_2$ of $X$ and a sequence of mappings $S_{n}:D_2 \to X$ such that\par 
		$(i)$ $\limsup_n || T^{n} x||^{\frac{1}{n}}< e^{-\gamma}$ for every $x\in D_1.$ \par 
		$(ii)$ $\limsup_n || S_{n} y||^{\frac{1}{n}}< e^{\gamma}$ for every $y\in D_2.$ \par
		$(iii)$ $|| T^{n}S_{n}y - y|| \to 0$ as $k\to \infty$ for every $y\in D_2.$ 
		
	\end{Theorem}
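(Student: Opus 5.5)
Given opene sets $A_1\times U$ and $A_2\times V$ in $A\times X$, the goal is a point $a\in A_1$, a vector $u\in U$ and an $n$ with $P^n(a,u)=(f^n a,\,h_n(a)T^n u)\in A_2\times V$. The plan has three steps. First, use ergodicity of the base to find many good base points with controlled growth of $|h_n(a)|$. Then build a Kitai-style vector $u=x+\frac{1}{h_n(a)}S_n y$, with $x\in D_1\cap U$ and $y\in D_2\cap V$. Finally, check that the estimates (i) and (ii) beat the growth of $|h_n(a)|^{\pm1}$.

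\textbf{Step 1 (base).} Apply Birkhoff's Theorem \ref{Birkhoff} to $\phi=\log|h|$, which lies in $L^1(\mu)$ because $\gamma$ is finite. For $\mu$-a.e. $a$,
$$\tfrac1n\log|h_n(a)|=\tfrac1n\sum_{i<n}\log|h(f^ia)|\to\gamma.$$
In particular $h_n(a)\neq0$ for large $n$. Since $\mu$ has full support, $\mu(A_1)>0$ and $\mu(A_2)>0$.

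We need infinitely many $n$ for which both $f^n a\in A_2$ and $|h_n(a)|$ is close to $e^{n\gamma}$. Ergodicity gives, for a.e. $a$, that the set $\{n: f^na\in A_2\}$ has positive density $\mu(A_2)$. Intersecting with the full-measure Birkhoff set, choose $a\in A_1$ lying in both. Fix $\delta>0$ small, to be chosen in Step 3. Then there is $n_0$ such that for all $n\ge n_0$,
$$e^{n(\gamma-\delta)}\le|h_n(a)|\le e^{n(\gamma+\delta)}.$$
Since this holds for all large $n$, it holds along the infinite set of return times $n$ with $f^na\in A_2$.

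\textbf{Step 2 (fiber).} Pick $x\in D_1\cap U$ and $y\in D_2\cap V$. By (i) and (ii) there is $\eta>0$ with, for large $n$,
$$\|T^nx\|\le e^{-n(\gamma+\eta)},\qquad \|S_ny\|\le e^{n(\gamma-\eta)}.$$
Set $u_n=x+h_n(a)^{-1}S_ny$. Then
$$\|u_n-x\|\le e^{n(\gamma-\eta)}e^{-n(\gamma-\delta)}=e^{-n(\eta-\delta)},$$
and
$$h_n(a)T^nu_n-y=h_n(a)T^nx+(T^nS_ny-y).$$
The first term is bounded by $e^{n(\gamma+\delta)}e^{-n(\gamma+\eta)}=e^{-n(\eta-\delta)}$, and the second tends to $0$ by (iii).

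\textbf{Step 3 (conclusion).} Choose $\delta<\eta$. Then along the return times, $u_n\to x$ and $h_n(a)T^nu_n\to y$. So for large such $n$ we have $u_n\in U$ and $P^n(a,u_n)\in A_2\times V$, which proves transitivity.

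The main obstacle is the interplay in Step 1. Because $\eta$ depends on the chosen $x$ and $y$, the point $a$ must be chosen after $x,y$ and $\delta$; this is harmless, since a.e. $a$ works. The other delicate point is ensuring the return times to $A_2$ are infinite for some $a\in A_1$. This holds because Birkhoff applied to $\mathbf 1_{A_2}$ gives positive density of returns a.e., and $A_1$ has positive measure. If $\gamma$ were defined with $\log|h|$ unbounded below (zeros of $h$), Birkhoff still applies since finiteness of $\gamma$ is assumed.
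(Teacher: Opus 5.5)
Your argument is correct. The paper gives no proof of this statement; it is quoted from \cite{bayart}.

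The nearest argument in the paper is the proof of Theorem \ref{mini-weak}. It uses the same two ingredients you do: a Birkhoff-generic base point for $\log|h|$, and the Kitai-type vector $z+h_n(a)^{-1}S_n y$. It packages them as a Baire category argument instead. It fixes an arbitrary $a$ in the Birkhoff set $C$, shows every $E(a,B,V)$ is open and dense in $X$, and obtains $x$ with $(a,x)$ a transitive point. Because its criterion holds only along $(n_k)$, the paper needs minimality of the base (syndetic returns of $a$ to $B$) and the shifted targets $T^{-i}V$ to make the fiber hitting set thick.

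You instead check the definition of transitivity directly on basic opene sets $A_1\times U$, $A_2\times V$, choosing $a\in A_1$ generic also for $\mathbf{1}_{A_2}$. Since the criterion holds along the full sequence, every large $n$ works in the fiber. So infinitely many returns to $A_2$ suffice, and these come from ergodicity plus full support of $\mu$. Full support is the paper's standing assumption in that section, and it is genuinely needed, since without it the base need not even be transitive. Neither minimality, thickness, nor completeness of $X$ enters your argument.

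The price is that you only get transitivity, not the stronger conclusion that $\mu$-a.e.\ $a$ admits some $x$ with $(a,x)$ transitive. Your estimates yield that too, however. Take $a$ generic for $\log|h|$ and for $\mathbf{1}_B$ for all $B$ in a countable base of $A$; then each $E(a,B,V)$ is dense and Baire category finishes.

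One small point: the Birkhoff-generic set does not depend on $\delta$ or $\eta$; only the threshold $n_0$ does. So the order of choices you worry about at the end is a non-issue.
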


In linear dynamics usually hypercyclicity criterion are considered with respect to some arbitrary sequence $(n_k)$. In the following theorem we will prove topological transitivity of the above mentioned skew product, where we assume the conditions $(i)-(iii)$ of Theorem \ref{hc} with respect to some sequence $(n_k)$ instead of the full sequence $(n).$ To prove this we need to take the base of the skew product to be ergodic and minimal. 
	\begin{Theorem}\label{mini-weak}
		Let $(A, f)$ be a minimal semicascade, where $A$ is a compact metric space and also, the system $(A,\mu,f)$ be ergodic, where $\mu$ is a Borel probability measure on $A$. Let $h:A\to \mathbb{C}$ be a continuous function with $\gamma= \displaystyle{\int_A} \log |h| d\mu$ finite.\par 
		Suppose that $X$ is an infinite dimensional separable complex Banach space and $T\in \mathcal{L}(X)$. Then the skew product system  $(A\times X, P)$,  is topologically transitive if there exist two dense subsets $D_1, D_2$ of $X$, an increasing sequence $(n_k)$ of positive integers and a sequence of mappings $S_{n_k}:D_2 \to X$ such that\par 
		$(i)$ $\limsup_{k} || T^{n_k} x||^{\frac{1}{n_k}}< e^{-\gamma}$ for every $x\in D_1.$ \par 
		$(ii)$ $\limsup_{k} || S_{n_k} y||^{\frac{1}{n_k}}< e^{\gamma}$ for every $y\in D_2.$ \par 
		 $(iii)$ $|| T^{n_k}S_{n_k}y - y|| \to 0$ as $k\to \infty$ for every $y\in D_2.$\\
	 Moreover the set $\{a\in A:\  (a,x)$ is a transitive point of $(A\times X, P)$ for some $x\in X\}$ has measure $1.$
	\end{Theorem}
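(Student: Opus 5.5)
Given opene $A_1,A_2\subset A$ and $U,V\subset X$, the plan is to find $a\in A_1$ and $k$ with $f^{n_k}(a)\in A_2$ and $h_{n_k}(a)T^{n_k}(U)\cap V\neq\emptyset$. Pick $x\in D_1\cap U$ and $y\in D_2\cap V$, and test the vector $u_k=x+h_{n_k}(a)^{-1}S_{n_k}y$. This gives
$$h_{n_k}(a)T^{n_k}u_k=h_{n_k}(a)T^{n_k}x+T^{n_k}S_{n_k}y .$$
By (iii) it suffices to have $|h_{n_k}(a)|\,\|T^{n_k}x\|\to 0$ and $|h_{n_k}(a)|^{-1}\|S_{n_k}y\|\to 0$, so that $u_k\in U$ and $h_{n_k}(a)T^{n_k}u_k\in V$. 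Since $\gamma$ is finite, $h\neq 0$ $\mu$-a.e. The Birkhoff Ergodic Theorem (Theorem~\ref{Birkhoff}) applied to $\log|h|\in L^1(\mu)$ gives, for $\mu$-a.e. $a$,
$$\tfrac1n\log|h_n(a)|=\tfrac1n\sum_{i<n}\log|h(f^ia)|\to\gamma .$$
Hence $|h_{n_k}(a)|^{1/n_k}\to e^{\gamma}$. Combining this with the $\limsup$ bounds in (i) and (ii), both products decay geometrically along $(n_k)$. So for every $a$ in a full-measure set $A_0$ the fiber estimates hold along the entire sequence $(n_k)$.

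The main obstacle is the base condition, namely simultaneously having $a\in A_1$ and $f^{n_k}(a)\in A_2$ for some large $k$ along the prescribed sequence $(n_k)$. Minimality alone gives only a syndetic return set, which need not meet $(n_k)$. I would try to reduce the problem to a point-based statement and use the freedom to perturb the fiber index. The set $A_0$ is $f$-invariant up to null sets, because $h_{n}(f^pa)=h_{n+p}(a)/h_p(a)$. Then I would exploit that the fiber estimates are robust under bounded shifts of $n$: for a fixed $j$, $T^{n_k+j}=T^jT^{n_k}$ with $\|T^j\|$ bounded, and $h_{n_k+j}(a)=h_j(f^{n_k}a)h_{n_k}(a)$ with $h_j$ bounded on compact $A$. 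Concretely, I would take $a\in A_0\cap A_1$, which is possible since $\mu$ has full support. Minimality gives an $M$ such that every $f^{n_k}(a)$ has some $j_k\le M$ with $f^{n_k+j_k}(a)\in A_2$. Passing to a subsequence, $j_k=j$ is constant. I would then rerun the vector construction with target $T^{-j}$-type preimage data. To handle this, I would replace $V$ by an opene $V'$ and $y$ by a vector with $T^jy'$ close to $y$, or more simply apply the argument to the pair $(U, V'')$ where $V''$ is chosen so that $h_j(b)T^j(V'')\subset V$ for $b$ near $f^{n_k}(a)$ along a further convergent subsequence of $f^{n_k}(a)$. The requirement $h_j(b)\neq0$ near the limit point needs care, and I would handle it by choosing the limit in the support where $h$ does not vanish. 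This mirrors the almost-periodicity argument in Theorem~\ref{trans1}.

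Transitivity then follows, since some $P^{n_k+j}(a,u)$ lies in $A_2\times V$. For the "moreover" part, I would fix countable bases $\{A_i\}$ of $A$ and $\{U_i\}$ of $X$. For each $a\in A_0$, the argument above (with $a$ fixed and only the fiber opens and target base opens varying) shows that the set of $x$ with $(a,x)$ having dense $P$-orbit is a countable intersection of open dense subsets of $X$. By Baire this set is nonempty, so the exceptional set of $a$ is contained in $A\setminus A_0$, which has measure zero.
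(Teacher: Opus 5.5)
Your proof has the same architecture as the paper's. Both use the Birkhoff-generic set (the paper's $C$, your $A_0$), bounded-gap returns to the target base set coming from minimality, fiber targets pulled back by $T^{-j}$, and a Baire argument in $X$ for a fixed generic $a$ to get the measure-one statement. The paper shows that $\{n : h_n(a)T^n(U)\cap V\neq\emptyset\}$ contains whole blocks $\{n_k,\dots,n_k+r\}$, using vectors $y_i\in T^{-i}(V)\cap D_2$, and intersects this thick set with the syndetic return set. You instead pigeonhole a single shift $j\le M$, which needs only one preimage vector. That is a harmless repackaging.

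The step you flag as needing care is not handled correctly as written, though the fix is immediate.

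\begin{itemize}
\item Boundedness of $h_j$ on $A$ gives only an upper bound on $|h_{n_k+j}(a)|/|h_{n_k}(a)|=|h_j(f^{n_k}a)|$. This controls $|h_{n_k+j}(a)|\,\|T^{n_k}x\|$ but not $|h_{n_k+j}(a)|^{-1}\|S_{n_k}y'\|$.
\item Your alternative via a limit point $b$ of $(f^{n_k}a)$ with $h_j(b)\neq0$ is unsupported. Once $j$ and the subsequence are fixed, the accumulation points are not at your disposal. Nothing you say prevents them from lying in the closed null set $\{h_j=0\}$, and full support of $\mu$ is irrelevant here.
\item If $h_j(f^{n_k}a)\to0$, scaling the test vector by $h_{n_k}(a)^{-1}$ and then applying $h_j(f^{n_k}a)T^j$ drives the image to $0$.
\end{itemize}

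The correct move, which is the paper's, is to put the full shifted scalar into the test vector. Take $y'\in D_2\cap T^{-j}(V)$ and $u_k=x+h_{n_k+j}(a)^{-1}S_{n_k}y'$. Then $h_{n_k+j}(a)T^{n_k+j}u_k=T^j\bigl(h_{n_k+j}(a)T^{n_k}x+T^{n_k}S_{n_k}y'\bigr)$.

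Since $a$ is Birkhoff-generic and $j$ is fixed, $\frac{1}{n_k}\log|h_{n_k+j}(a)|\to\gamma$. This is exactly the two-sided exponential control you need, however small $h_j(f^{n_k}a)$ becomes; it can only decay subexponentially. No convergent subsequence of $(f^{n_k}a)$ is needed at all.

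You should also justify that $T^{-j}(V)\neq\emptyset$. By (iii), each $y\in D_2$ is a limit of vectors in the range of $T^{n_k}$. So $T^{n_k}$ has dense range, and hence so does $T^j$ for $j\le n_k$.

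The same correction applies verbatim in your fixed-$a$ Baire argument for the ``moreover'' part.
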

	
	\begin{proof}
		Let us consider the set 
		
		\begin{center}
			$C=\{a\in A: \frac{1}{n}\displaystyle{\sum_{j=0}^{n-1}} \log|h(f^j(a))|\to \displaystyle{\int_A} \log |h| d\mu \}.$
		\end{center}
		From Theorem \ref{Birkhoff}, we have $\mu(C)=1.$ We claim that for every $a\in C$, there exists some $x\in X$ such that $(a,x)$ is a transitive point for $(A\times X, P).$ For every pair of opene sets $B\subset A$, $V\subset X$ and for every $a\in C$ we define the following set
		\begin{center}
			$E(a,B,V)=\{x\in X:\  \text{there exists some}\ n\in \mathbb{N}\ \text{such that}\ f^n(a)\in B\ \text{and}\ h_n(a)T^n(x)\in V\}$
		\end{center}
		Clearly $E(a,B,V)$ is an open subset of $X.$ Now we will show that $E(a,B,V)$ is dense in $X.$ Let $U$ be any opene subset of $X.$ Firstly, there exists $p\in \mathbb{N}$ such that $f^p(a)\in B.$ Since $f^{p}(a)$ is an almost periodic point, the set $N(f^{p}(a),B)$ is syndetic. Consequently, the set $\{n>p: f^n(a)\in B\}$ is also syndetic. Now we only need to prove that the set $\{n\in \mathbb{N}:h_n(a)T^n(U)\cap V \neq \emptyset\}$ is thick. \par
		
		Let $r\in \mathbb{N}.$ From the given assumption, let us choose $z\in U\cap D_1$ and for every $i\in \{0,1,\dots, r\}$, $y_i\in T^{-i}(V)\cap D_2.$ Also we may choose $\varepsilon> 0$ small enough such that $B(z,\varepsilon)\subset U$ and $B(y_i,2\varepsilon)\subset T^{-i}(V)$ for each $i\in \{0,1,\dots,r\}.$ Again from the given conditions we can say that there exists some $\eta>0$ and $K\in \mathbb{N}$ such that $||T^{n_k}(z)||^{\frac{1}{n_k}}\leq e^{-\gamma-\frac{\eta}{2}}$, $||S_{n_k}(y_i)||^{\frac{1}{n_k}}\leq e^{\gamma-\frac{\eta}{2}}$ and $||T^{n_k}S_{n_k}(y_i)-y_i||<\varepsilon$ for every $k>K$, where $i=0,1,\dots,r.$ On the other hand as $a\in C$, there exists $N\in \mathbb{N}$ such that for every $n\geq N$ 
		\begin{center}
			
			$e^{n(\gamma-\frac{\eta}{4})}<h_n(a)<e^{n(\gamma+\frac{\eta}{4})}.$ 
		\end{center}
		One can easily check that for $k$ large enough, for every $i\in \{0.\dots,r\}$ we can choose  $z_i=z+h_{n_k+i}(a)^{-1}S_{n_k}(y_i)\in U$ such that $h_{n_k+i}(a)T^{n_k+i}(z_i)=T^i(h_{n_k+i}(a)T^{n_k}(z_i))\in V$ as \begin{center}
			$||h_{n_k+i}(a)T^{n_k}(z_i)-y_i||=||h_{n_k+i}(a)T^{n_k}(z)||+||T^{n_k}S_{n_k}(y_i)-y_i||<2\varepsilon.$
		\end{center}
		Hence the set $\{n\in \mathbb{N}:h_n(a)T^n(U)\cap V \neq \emptyset\}$ is thick. Then there exists some $n\in \mathbb{N}$ and $u\in U$ such that $f^n(a)\in B$ and $h_n(a)T^n(u)\in V$, which implies that $E(a,B,V)$ is dense. Then using Baire Category theorem, we can say that $\bigcap E(a,B,V)$ is non-empty. Consequently, there exists some $x\in X$ such that $(a,x)$ is a transitive point of $P.$ 				 
	\end{proof}
	
	\begin{Remark}
				In Theorem \ref{mini-weak}, we can take the system $(A,\mu, f)$ to be strictly ergodic which is equivalent to minimality. Hence Theorem 2.9 of \cite{bayart} is true for some sequence $(n_k)$ also, making Theorem \ref{mini-weak} stronger than Theorem 2.9 of \cite{bayart}. We illustrate this with an example:

			\begin{Example}
				Let $\gamma$ and $\epsilon$ be two positive number such that $(\gamma-\varepsilon)>0$. First of all we consider a sequence $(r_k)$ of positive integers such that $r_1=1$ and $r_{k+1}=4r_k+3$ for every $k\in \mathbb{N}.$ Then we define a sequence $(w_n)$ by
				\begin{center}
					$w_{n}=
					\begin{cases}
						e^{(r_k-i+1)(\gamma-\varepsilon)}& \text{when}\ n=(r_k+i)\ \text{for}\  i=1,\dots, (r_k+1)\\
						e^{(2r_k+3-i)2\gamma}& \text{when}\ n=(2r_k+i)\ \text{for}\ i=2,\dots, (2r_k+3) 
					\end{cases}$
				\end{center}
				Now let $\ell^2(w)=\{(x_n): \displaystyle{\sum_{n=1}^{\infty}}|x_n|^2 w_n^2 < \infty\}$ be a weighted $\ell^2$-space. Since $\sup_n\frac{w_n}{w_{n+1}} < \infty,$ the backward shift operator $B,$ defined by $B(x_1,x_2,x_3,\dots)=(x_2,x_3, \dots)$ is continuous on $\ell^2(w)$ \cite{linear chaos}. \par 
				Let the system $(A,\mu,f)$ be strictly ergodic, where $A$ is a compact metric space. Now we consider the skew product $P:A\times \ell^2(w)\to A\times \ell^2(w)$ by $P(a,x)=(f(a),h(a)B(x)),$ where $h:A\to \mathbb{C}$ is defined by $h(a)=e^{\gamma}$ for every $a\in A.$\par   
				Let us take $D_1=D_2=$ space of all sequences, where all but finitely many terms of the sequence are zero and a mapping $S: D_2\to D_2$, defined by $S(x_1,x_2,\dots)=(0,x_1,x_2,\dots)$. Let $x\in D_2$. Then there exist some $k_0\in \mathbb{N}$ and $M>0$ such that for every $k\geq k_0$ we have $||S^{r_k}(x)||^2=\displaystyle{\sum_{i=1}^{k_0}}w_{r_k+i}^2 |x_i|^2= \displaystyle{\sum_{i=1}^{k_0}} e^{2(r_k-i+1)(\gamma-\varepsilon)}|x_i|^2 \leq M e^{2r_k(\gamma-\varepsilon)}.$ Therefore $\limsup_{k}||S^{r_k}(x)||^{\frac{1}{r_k}}< e^{\gamma}.$ Now it is clear that $B$ satisfies all the conditions $(i)-(iii)$  of Theorem \ref{mini-weak}.
				However it does not satisfy the conditions of Theorem 2.9 of \cite{bayart}. \par 
				Note that for every $x\in D_2$ there exists some $k_1\in \mathbb{N}$ and $M_1>0$ such that for every $k>k_1$ we have $||S^{2r_k+1}(x)||^2=\displaystyle{\sum_{i=1}^{k_0}}w_{2r_k+1+i}^2 |x_i|^2= \displaystyle{\sum_{i=1}^{k_0}} e^{4\gamma(2r_k-i+2)}|x_i|^2 = M_1 e^{4\gamma (2r_k+1)}.$ Therefore $\limsup_{n}||S^{n}(x)||^{\frac{1}{n}}>e^{\gamma}.$\par 
				 Here $P$ is topologically transitive.
			\end{Example}
			\end{Remark}

				\begin{Theorem} \label{nlc}
						Let $(A,f)$ be a minimal semicascade, with $A$ is a compact metric space and $\mu$ be an ergodic probability measure on $A$ for $f.$ Let $h:A\to \mathbb{C}$ be a continuous function with $\gamma= \displaystyle{\int_A} \log |h| d\mu$ be finite. Then the skew product $P$ is topological transitive if there exists $a\in A$ such that the sequence operators $(h_n(a)T^n)$ is weakly mixing.
					\end{Theorem}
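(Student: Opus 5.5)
The plan is to follow the proof of Theorem \ref{trans1}, but to replace the local compactness argument with the linear structure. Fix opene $A_1,A_2\subset A$ and $U,V\subset X$, and let $a\in A$ be a point for which $(h_n(a)T^n)$ is weakly mixing. By minimality, pick $p$ with $f^p(a)\in A_1$. Since $f^p(a)$ is almost periodic, the set $R=\{n: f^{n+p}(a)\in A_2\}$ is syndetic with some gap $M$. Writing $h_{n+p}(a)=h_n(f^p(a))h_p(a)$, transitivity will follow once we find $n\in R$ and $u\in U$ with $h_n(f^p(a))T^n u\in V$.

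First reduce to the sequence at the point $a$. Consider $U'=T^p(U)$; this need not be open. It is better to shift the target instead. The condition $h_{n+p}(a)T^{n+p}(u)\in h_p(a)V'$ is not quite what we want, so we argue directly: we need $h_{n+p}(a)T^{n+p}u'\in h_p(a)V$ with $u'$ ranging near $T^p$-preimages. Cleaner is to note that the sequence $(h_n(a)T^n)$ is commuting, since all its terms are scalar multiples of powers of $T$. By Theorem \ref{hereditarily} and the Bès--Peris remark for commuting sequences, it is hereditarily transitive and satisfies the Universality Criterion along some $(n_k)$, with dense sets $D_1,D_2$ and maps $S_{n_k}$. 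The core claim is then the following: for any opene $U,W$, the hitting set $N=\{n: h_n(a)T^n(U)\cap W\neq\emptyset\}$ is thick. Given this with $W=h_p(a)V$ (open when $h_p(a)\neq0$; the case $h$ vanishing on the orbit would make $\gamma=-\infty$, which the hypothesis rules out), and taking $U''=(T^p)^{-1}$-type adjustments as in Theorem \ref{trans1} via $V'$, we intersect the thick set $N$ with the syndetic set $R+p$ and finish exactly as before.

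For thickness, the key step, I would mimic the proof of Theorem \ref{mini-weak}. Given $r$, pick $z\in U\cap D_1$ and $y_i\in D_2$ close to points of $T^{-i}(W)$, using that $T^{-i}(W)$ is open and $D_2$ is dense. Then set $z_i=z+(h_{n_k+i}(a))^{-1}h_{n_k}(a)\,S_{n_k}y_i$, which gives $h_{n_k+i}(a)T^{n_k+i}z_i=\frac{h_{n_k+i}(a)}{h_{n_k}(a)}T^i(h_{n_k}(a)T^{n_k}z)+T^i(h_{n_k}(a)T^{n_k}S_{n_k}y_i)$. Here the Universality Criterion makes $h_{n_k}(a)T^{n_k}z\to0$, $S_{n_k}y_i\to0$ and $h_{n_k}(a)T^{n_k}S_{n_k}y_i\to y_i$.

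The main obstacle is controlling the ratios $h_{n_k+i}(a)/h_{n_k}(a)=h_i(f^{n_k}(a))$ for $0\le i\le r$. These are bounded above because $|h|$ is bounded on compact $A$. They must also stay bounded away from zero so that $z_i\in U$. I would handle this as in Theorem \ref{trans1}: pass to a subsequence $(r_k)$ of $(n_k)$ along which $f^{r_k}(a)\to b$, so the ratios converge to $h_i(b)$, and use hereditary transitivity to keep the criterion along $(r_k)$. If some $h_i(b)=0$, I would instead choose the subsequence via minimality so that $f^{r_k}(a)$ lands in a region where $h$ is nonzero, which exists since $\gamma$ is finite. Alternatively, I would avoid dividing altogether by choosing $y_i$ targeted at $(h_i(b))^{-1}$-scaled sets.
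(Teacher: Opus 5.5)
Your plan is essentially the paper's. Its proof only asserts that, by rerunning the proof of Theorem~\ref{mini-weak} with the Universality Criterion for $(h_n(a)T^n)$, the set $\{n: h_n(a)T^n(U)\cap V\neq\emptyset\}$ is thick, and then intersects it with the syndetic return times of $a$.

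You correctly isolate the point that sketch passes over. In Theorem~\ref{mini-weak} the factor $h_{n_k+i}(a)/h_{n_k}(a)=h_i(f^{n_k}(a))$ was absorbed by the exponential slack coming from $a\in C$ and the strict inequalities. Here there is no slack, so you need $|h_i(f^{n_k}(a))|$ bounded away from $0$. If $h$ has no zeros this is automatic: $|h|\geq c>0$ on the compact $A$, so $|h_i(f^n(a))|\geq c^i$ for every $n$, no subsequence is needed, and your $z_i$ do give thickness.

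Three small repairs there:
\begin{itemize}
\item $h_n(a)\neq 0$ holds because a transitive sequence of operators on an infinite-dimensional space cannot be eventually $0$. It is not because of $\gamma$: a zero of $h$ on one orbit does not make $\gamma=-\infty$.
\item $T^{-i}(W)\neq\emptyset$ because that transitivity forces each $T^i$ to have dense range.
\item The change of base point is cleanest done as in Theorem~\ref{mini-weak} (keep $a$, show each $E(a,B,V)$ is dense, apply Baire), or with $U''=T^{-p}(U)$ and $W=h_p(a)V$.
\end{itemize}

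The genuine gap is the case where $h$ vanishes somewhere, which finiteness of $\gamma$ permits. Your fallbacks do not work there. The sequence $(n_k)$ is handed to you by the B\`es--Peris theorem, and minimality says nothing about where $f^{n_k}(a)$ accumulates. If all limit points lie in $h^{-1}(0)$, then $h_i(f^{n_k}(a))\to 0$, and you would need $S_{n_k}y_i$ to tend to $0$ faster than that, which the criterion does not give. Rescaling by $h_i(b)^{-1}$ is meaningless when $h_i(b)=0$.

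In fact the thickness step itself can fail; here is a sketch of a counterexample.
\begin{itemize}
\item Let $f$ be an irrational rotation by $\alpha$ on $\mathbb{S}^1$ and $h(x)=d(x,z_0)\,d(x,z_0+\alpha)$, so $\gamma$ is finite.
\item Let $a$ be a point whose orbit passes within $e^{-\Delta_k}$ of $z_0$ at sparse times $m_k$, with $\Delta_k\to\infty$ fast.
\item Let $T$ be a bilateral weighted backward shift, $Te_j=w_je_{j-1}$, with $e^{-C}\leq w_j\leq e^{C}$.
\end{itemize}
Then $\log|h_n(a)|$ drops by roughly $\Delta_k$ at each of the two steps $m_k\to m_k+1\to m_k+2$. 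One can choose the weights so that $|h_n(a)|\prod_{i=-n+1}^{0}w_i\to 0$ and $|h_n(a)|\prod_{i=1}^{n}w_i\to\infty$ along $n=m_k+1$. Hence the Universality Criterion holds and $(h_n(a)T^n)$ is weakly mixing. At the same time, for every other $n$, either the first quantity is $\geq 1$ or the second is $\leq 1$.

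Looking at coordinates $0$ and $-n$, this forces $\{n: h_n(a)T^n(U)\cap V\neq\emptyset\}\subset\{m_k+1\}$ for $U=V=B(e_0,\varepsilon)$ with $\varepsilon<1/2$. Since $f^{m_k+1}(a)\to z_0+\alpha$, this set meets the return times of $a$ to any opene $B$ with $z_0+\alpha\notin\overline{B}$ only finitely often. So once zeros of $h$ are allowed, the claim that the fibre hitting set over $a$ is thick is false, and both your argument and the paper's rest on it. You need either the extra hypothesis that $h$ is zero-free, or a genuinely different idea that uses base points off the orbit of $a$.
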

				\begin{proof}
					Clearly the sequence operators $(h_n(a)T^n)$ satisfies Universality Criterion. If we carefully observe the proof of Theorem \ref{mini-weak}, then from Universality Criterion of $(h_n(a)T^n)$, we can prove that for every pair of opene sets $U,V$ of $X,$ the set $\{n\in \mathbb{N}: h_n(a)T^n(U)\cap V \neq \emptyset\}$ is a thick set. Then same as Theorem \ref{mini-weak}, we can conclude that $P$ is topologically transitive.
					\end{proof}
			\begin{Remark}
				From the above theorem we can say that the implications of  Theorem \ref{trans1} is also true for this skew product $P$, where the fiber is not locally compact. 
			\end{Remark}

	\vspace{0.05cm}

	It is well known  that weakly mixing and hypercyclicity criterion are equivalent in linear structure. Then one can naturally ask what happens when the concept of skew products comes to the picture. Now we discuss the relation between weakly mixing property of skew products with the conditions discussed in Theorem \ref{mini-weak}. For an infinite dimensional Banach space $X,$ we consider the Euclidean norm in the product space $X\times X$ i.e., $||(x,y)||=\sqrt{||x||^2+||y||^2}$ for every $(x,y)\in X\times X.$
	
		\begin{Theorem} \label{wmsp}
			
			Let the system $(A,\mu,f)$ be weakly mixing, where $A$ is a compact metric space and $\mu$ is a probability measure on $A$. Let $h:A\to \mathbb{C}$ be a continuous function with $\gamma= \displaystyle{\int_A} \log |h| d\mu$ finite. \par 
			Suppose that $X$ is an infinite dimensional separable complex Banach space and $T\in \mathcal{L}(X)$. Then the skew product system  $(A\times X, P)$ is weakly mixing if there exist two dense subsets $D_1, D_2$ of $X$, a syndetic sequence $(n_k)$ of positive integers and a sequence of mappings $S_{n_k}:D_2 \to X$ such that\par 
			$(i)$ $\limsup_{k} || T^{n_k} x||^{\frac{1}{n_k}}< e^{-\gamma}$ for every $x\in D_1.$ \par 
			$(ii)$ $\limsup_{k} || S_{n_k} y||^{\frac{1}{n_k}}< e^{\gamma}$ for every $y\in D_2.$ \par
			$(iii)$ $|| T^{n_k}S_{n_k}y - y|| \to 0$ as $k\to \infty$ for every $y\in D_2.$ \\
			Moreover the set $\{(a,b)\in A\times A:\  (a,x,b,y)\in Trans_{P\times P} \ \text{for some}\ x,y\in X\}$ has measure $1$ with respect to the product measure induced from $\mu.$
	\end{Theorem}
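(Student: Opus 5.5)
The plan is to run the proof of Theorem \ref{mini-weak} for the product system $(A\times X\times A\times X, P\times P)$, with the Birkhoff ergodic theorem applied to $f\times f$ and the base-return times coming from syndeticity of $(n_k)$ rather than from minimality. Since $(A,\mu,f)$ is weakly mixing, $f\times f$ is ergodic for $\mu\times\mu$. Apply Theorem \ref{Birkhoff} to $\log|h|$ on both coordinates, together with the Birkhoff theorem for $f\times f$ applied to indicator functions of a countable base $\{B_j\times B_l\}$ of $A\times A$. This yields a set $C\subset A\times A$ with $(\mu\times\mu)(C)=1$ having two properties for every $(a,b)\in C$. First, $\frac1n\log|h_n(a)|\to\gamma$ and $\frac1n\log|h_n(b)|\to\gamma$. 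Second, the orbit of $(a,b)$ under $f\times f$ visits each $B_j\times B_l$ with positive frequency; this uses that $\mu$ has full support, so $(\mu\times\mu)(B_j\times B_l)>0$.

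Fix $(a,b)\in C$. For opene $B,B'\subset A$ and $V,V'\subset X$, consider the set $E$ of pairs $(x,y)\in X\times X$ for which some $n$ satisfies $f^n(a)\in B$, $f^n(b)\in B'$, $h_n(a)T^nx\in V$ and $h_n(b)T^ny\in V'$. This set is open. By the Baire category theorem applied over a countable base, it suffices to show $E$ is dense. Fix opene $U,U'$. We need an $n$ in $R:=\{n: (f\times f)^n(a,b)\in B\times B'\}$ that also lies in
\[
N:=\{n: h_n(a)T^n(U)\cap V\neq\emptyset,\ h_n(b)T^n(U')\cap V'\neq\emptyset\}.
\]

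To get such an $n$, we show that $N$ contains, for every $r$, a block of the form $\{n_k,n_k+1,\dots,n_k+r\}$ with $k$ arbitrarily large. Pick $z\in U\cap D_1$, $z'\in U'\cap D_1$, and for each $i\le r$ pick $y_i\in T^{-i}V\cap D_2$ and $y_i'\in T^{-i}V'\cap D_2$. Then perturb as in the proof of Theorem \ref{mini-weak}:
\[
z_i=z+h_{n_k+i}(a)^{-1}S_{n_k}y_i,\qquad z_i'=z'+h_{n_k+i}(b)^{-1}S_{n_k}y_i'.
\]
The growth estimates (i) and (ii), combined with $|h_n(a)|,|h_n(b)|=e^{n(\gamma+o(1))}$, make $h_{n_k+i}(a)T^{n_k}z\to0$ and $h_{n_k+i}(a)^{-1}S_{n_k}y_i\to0$. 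The factor $h_{n_k+i}/h_{n_k}$ is bounded for fixed $i$, since $\tilde{h}$ is bounded on compact $A$ and the error is absorbed by the $\eta$-margin. Condition (iii) then gives $z_i\in U$ with $h_{n_k+i}(a)T^{n_k+i}z_i\in V$, and likewise for the primed data. This is a purely fiber computation and essentially duplicates the earlier proof.

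The main obstacle is intersecting $R$ with $N$. Under minimality, $R$ was syndetic and $N$ thick, but now $R$ only has positive density, and $N$ contains long blocks only starting at the points $n_k$. I would use syndeticity of $(n_k)$ with gap $M$: the blocks $[n_k,n_k+r]$ with $r\ge M$ then cover every sufficiently large integer. So I would aim to show that $N$ actually contains a cofinite set, chaining the block estimates for all $k\ge K$ with a single $K$ that works uniformly for $i\le M$. If that holds, the infinite set $R$ meets $N$ and $E$ is dense. The delicate point is that $K$ depends on $r=M$ only, which is fixed, so it should go through. Finally, Baire gives $(x,y)$ with $(a,x,b,y)\in Trans_{P\times P}$, which shows both weak mixing and the measure-one statement.
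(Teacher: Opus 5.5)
Your proposal is correct and takes the same route as the paper, whose proof only says that $T\times T$ satisfies (i)--(iii), that $f\times f$ is ergodic for $\mu\times\mu$, and that the argument of Theorem \ref{mini-weak} then goes through. Your version spells out what that sketch leaves implicit: the two scalars $h_n(a)$ and $h_n(b)$ have to be handled coordinatewise, and since $f\times f$ need not be minimal, minimality is replaced by two facts — syndeticity of $(n_k)$ makes the fiber hitting set cofinite, and full support of $\mu$ with Birkhoff applied to indicator functions gives infinitely many base returns.
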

	\begin{proof}
		One can easily check that $T\times T$ satisfies given conditions $(i)-(iii)$. Also, since $f$ is weakly mixing in $(A,\mu)$, $f\times f$ is ergodic with respect to the product measure induced from $\mu$. Then the proof follows similarly as in Theorem \ref{mini-weak}. 
	\end{proof}

		\begin{Theorem}\label{weakly mixing}
			Let the system $(A,\mu,f)$ be ergodic, where $A$ is a compact metric space and  and $\mu$ is a probability measure on $A$. Let $h:A\to \mathbb{C}$ be a continuous function with $\gamma= \displaystyle{\int_A} \log |h| d\mu$ finite.\par 
			 Suppose that the skew-product system $(A\times X, P)$ is weakly mixing and the set $\{(a,b)\in A\times A: (a,x,b,y)\in Trans_{P\times P}\ \text{for some}\ x,y\in X \}$ has positive measure with respect to the product measure induced by $\mu$. Then there exist an increasing sequence of positive integers $(n_k)$, two dense subsets $D_1, D_2$ of $X$ and a sequence of mappings $S_{n_k}:D_2 \to X$ for which $T$ satisfies the following conditions:\par 
		$(i)$ $\limsup_k || T^{n_k} x||^{\frac{1}{n_k}}\leq e^{-\gamma}$ for every $x\in D_1.$ \par 
		$(ii)$ $\limsup_k || S_{n_k} y||^{\frac{1}{n_k}}\leq e^{\gamma}$ for every $y\in D_2.$ \par
		$(iii)$ $|| T^{n_k}S_{n_k}y - y|| \to 0$ as $k\to \infty$ for every $y\in D_2.$ 
		
	\end{Theorem}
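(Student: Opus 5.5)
The plan is to imitate the classical proof that weak mixing implies the Hypercyclicity Criterion, in which $T\oplus T$ has a hypercyclic vector $(x,y)$ and one chooses $n_k$ with $(T^{n_k}x,T^{n_k}y)\to(0,x)$. The new ingredient is that the weights $h_{n}(a)$ and $h_{n}(b)$ enter these approximations, and Birkhoff's theorem controls their exponential size. Let
\[
C=\Bigl\{a\in A:\ \tfrac1n\textstyle\sum_{j=0}^{n-1}\log|h(f^j a)|\to\gamma\Bigr\},
\]
exactly as in the proof of Theorem \ref{mini-weak}. Theorem \ref{Birkhoff} gives $\mu(C)=1$, so $C\times C$ has full product measure. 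It therefore meets the positive-measure set in the hypothesis, and I fix $(a,b)\in C\times C$ and $x,y\in X$ with $(a,x,b,y)\in Trans_{P\times P}$. Since $\gamma$ is finite and $a,b\in C$, no $h(f^j a)$ or $h(f^j b)$ vanishes. Hence $h_n(a),h_n(b)\neq0$ and $|h_n(a)|^{1/n}\to e^{\gamma}$, $|h_n(b)|^{1/n}\to e^{\gamma}$.

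\textbf{Choosing the sequence.} The point $(a,x,b,y)$ has dense orbit under $P\times P$, and $P^n\times P^n(a,x,b,y)=(f^na,\,h_n(a)T^nx,\,f^nb,\,h_n(b)T^ny)$. I will use density only in the two fiber coordinates: for each $k$ the open set $A\times B(0,1/k)\times A\times B(x,1/k)$ is hit by the orbit. Since the spaces have no isolated points, it is hit at infinitely many times. So I can choose an increasing sequence $(n_k)$ with
\[
\|h_{n_k}(a)T^{n_k}x\|<1/k \quad\text{and}\quad \|h_{n_k}(b)T^{n_k}y-x\|<1/k .
\]

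\textbf{The dense sets and the maps.} Take $D_1=D_2=D=\{p(T)x:\ p \text{ a polynomial}\}$. This is dense because the projection $\{h_n(a)T^nx\}$ of the transitive orbit is dense in $X$, and it consists of scalar multiples of the vectors $T^nx$, all of which lie in $D$. Define $S_{n_k}(p(T)x)=h_{n_k}(b)\,p(T)y$. To make this well defined, I will either note that $p(T)x$ determines $p$ (the vectors $T^nx$ are linearly independent because $x$ has an infinite, dense, orbit-type family), or simply choose one representing polynomial for each element of $D$.

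\textbf{Checking (i)--(iii).} For (i),
\[
\|T^{n_k}p(T)x\|\le\|p(T)\|\,\|T^{n_k}x\|\le\|p(T)\|\,\frac{1}{k\,|h_{n_k}(a)|},
\]
so taking $n_k$-th roots gives $\limsup_k\le e^{-\gamma}$. For (ii), $\|S_{n_k}p(T)x\|^{1/n_k}\le |h_{n_k}(b)|^{1/n_k}\|p(T)y\|^{1/n_k}\to e^{\gamma}$. For (iii), $T^{n_k}S_{n_k}p(T)x=p(T)\bigl(h_{n_k}(b)T^{n_k}y\bigr)\to p(T)x$ by continuity of $p(T)$.

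\textbf{Main obstacle.} The delicate step is the opening one. I must ensure that a single pair $(a,b)$ is simultaneously Birkhoff-generic, so that the weights have the correct exponential rate, and the base of a transitive point of $P\times P$. This is exactly why the positive-measure hypothesis is needed. I must also make sure that the chosen times $n_k$ can be taken strictly increasing, which is what the absence of isolated points provides. Finally, the inequalities only come out as non-strict ($\le$), matching the statement, since no margin $\eta$ is available here as it was in Theorem \ref{mini-weak}.
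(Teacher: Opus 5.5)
Your proof is correct, and it is shorter than the paper's. After fixing $a,b\in C$ and $(a,x,b,y)\in Trans_{P\times P}$, the paper takes a detour, in three steps:
\begin{enumerate}
\item It shows that $(a,x,b,h_n(b)T^ny)$ is again transitive for every $n$, using $h_n(b)\neq 0$ and the density of the range of $T^n$.
\item It uses the density of $\{h_n(b)T^ny\}$ to get transitive points $(a,x,b,z_k)$ with $z_k\to 0$.
\item It picks $n_k$ with $h_{n_k}(a)T^{n_k}x\to 0$ and $h_{n_k}(b)T^{n_k}z_k\to x$, and sets $S_{n_k}(h_j(a)T^jx)=h_{n_k}(b)h_j(a)T^jz_k$ on the orbit set $D=\{h_j(a)T^jx\}$.
\end{enumerate}
This is the weighted version of the classical proof that weak mixing implies the Hypercyclicity Criterion, where passing to vectors $z_k\to 0$ is what forces $S_{n_k}u\to 0$. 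You saw that this device is not needed for the statement as written. Condition (ii) only asks for the non-strict bound $\limsup_k\|S_{n_k}u\|^{1/n_k}\le e^{\gamma}$, and $\|S_{n_k}p(T)x\|=|h_{n_k}(b)|\,\|p(T)y\|$ meets it directly. So you can keep $y$ and skip both the auxiliary transitivity claim and the dense-range argument. The paper's route buys the stronger estimate $\|S_{n_k}u\|=o(|h_{n_k}(b)|)$. In other words, the maps $h_{n_k}(b)^{-1}S_{n_k}$ are approximate right inverses of $h_{n_k}(b)T^{n_k}$ on $D$ that tend to $0$, which is the genuine weighted analogue of the criterion and would be needed for any sharpening of the theorem. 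Your route buys brevity and makes visible how little the non-strict form of (ii) actually demands. Your supporting details are sound: the nonvanishing of $h(f^ja)$ from $a\in C$ and the finiteness of $\gamma$, the infinitely many returns that make $(n_k)$ increasing, and fixing a representing polynomial for each element of $D=\{p(T)x\}$.
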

	\begin{proof}
		Let us consider the set 
		\begin{center}
			$C=\{b\in A: \frac{1}{n}\displaystyle{\sum_{j=0}^{n-1}} \log|h(f^j(b))|\to \displaystyle{\int_A} \log |h| d\mu \}.$
		\end{center}
		Using Theorem \ref{Birkhoff}, we can say that $C\times C$ has measure $1$ with respect to the product measure induced from $\mu.$ Then there exist $a,b\in C$ such that $(a,x,b,y)$ is a transitive point of $P\times P$ for some $x,y\in X$.\par 
		First of all we claim that for every $n\in \mathbb{N},$ $(a,x,b,h_n(b)T^ny)\in Trans_{P \times P}.$ Note that as $(a,x,b,y)\in Trans_{P \times P}$, $h_n(a),h_n(b)\neq 0$ for every $n\in \mathbb{N}$. Also, Since $P$ is topologically transitive, then using linearity of $T$, one can easily check that $T(X)$ is dense in $X$. Let $O_1,O_2$ be two opene subsets of $A$ and $U_1,U_2$ be two opene subsets of $X.$ Let $n\in \mathbb{N}.$ Now from continuity of $T^n,$ we can say that $T^{-n}(h_n(b)^{-1}U_2)$ is an opene subset of $X$. Then there exists $k\in \mathbb{N}$ such that $P^k(a,x)\in O_1\times U_1$ and $P^k(b,y)\in O_2\times T^{-n}(h_n(b)^{-1}U_2).$ This implies that $h^k(b)T^k(h_n(b)T^ny)\in U_2.$ This proves the claim. \par 
		On the other hand, as $(a,x,b,y)\in Trans_{P\times P},$ it is clear that the set $\{h_n(b)T^n(y):n\in \mathbb{N}\}$ is dense in $X.$ So there exists $z_k\to 0$ such that $(a,x,b,z_k)\in Trans_{P\times P}$ for each $k\in \mathbb{N}.$ This implies that for each $k\in \mathbb{N},$ there exists $n_k$ such that $h_{n_k}(a)T^{n_k}x\to 0$ and $h^{n_k}(b)T^{n_k}(z_k)\to x.$ \par 
		We take $D_1=D_2=D=\{h_j(a)T^jx:j\in \mathbb{N}\}$ and define a sequence of mappings $S_{n_k}:D\to X$ by $S_{n_k}(h_j(a)T^jx)=h^{n_k}(b)h_j(a)T^jz_k$ for every $j\in \mathbb{N}.$ Then for every $j\in \mathbb{N},$ we have
		\bigskip
		\par
		$(i)$ $h_{n_k}(a)T^{n_k}(h_j(a)T^jx)=h_j(a)T^j(h_{n_k}(a)T^{n_k}x)\to 0.$\par 
		$(ii)$  $h^{n_k}(b)^{-1}S_{n_k}(h_j(a)T^jx)\to 0.$\par 
		$(iii)$ $T^{n_k}S_{n_k}(h_j(a)T^jx)=T^{n_k}(h_j(a)T^j(h^{n_k}(b)z_k))\to h_j(a)T^jx.$\par 
		\bigskip
		Since $a,b\in C$, for any $\varepsilon>0,$ there exists $N\in \mathbb{N}$ such that $e^{n(\gamma-\varepsilon)}<h_n(a)<e^{n(\gamma+\varepsilon)}$ and $e^{n(\gamma-\varepsilon)}<h_n(b)<e^{n(\gamma+\varepsilon)}$ for every $n\geq N.$ Then for every $u\in D$ we get the followings:
		\par
		\bigskip
		 
		$(i)$ There exists $K\in \mathbb{N}$ such that for every $k\geq K$, $||h_{n_k}(a) T^{n_k}(u)||<1$ 
		$\Rightarrow$ $||T^{n_k}(u)||<e^{-n_k(\gamma-\varepsilon)}$ 
		$\Rightarrow$ $||T^{n_k}(u)||^{\frac{1}{n_k}}<e^{-(\gamma-\varepsilon)}.$ Hence $\limsup_k ||T^{n_k}(u)||^{\frac{1}{n_k}}\leq e^{-\gamma}.$\par 
		$(ii)$ Similarly $\limsup_k||S_{n_k}(u)||^{\frac{1}{n_k}}\leq e^{\gamma}.$\par 
		$(iii)$ $|| T^{n_k}S_{n_k}u - u|| \to 0$ as $k\to \infty.$
		\bigskip
		\\ 
		This completes the proof.
	\end{proof}

Now from Theorem \ref{wmsp} and Theorem \ref{weakly mixing}, one can deduce the following:

\begin{Corollary}
	Let the system be $(A,\mu, f)$ be weakly mixing, where $A$ is a compact metric space and  and $\mu$ is a probability measure on $A$. Also, let $B_w$ be the weighted backward shift on $\ell^p(\mathbb{N})$ defined by $B_w(x_1,x_2,\dots)=(w_1x_2,w_2x_3,\dots)$. Consider the skew product $P:A\times \ell^p(\mathbb{N})\to A\times \ell^p(\mathbb{N})$ by $P(a,x)=(g(a),h(a)B_w(x)),$ where $h:A\to \mathbb{C}$ is a continuous function with $\gamma= \displaystyle{\int_A} \log |h| d\mu$ finite. Then  \par
	(i) If $\limsup_n(\displaystyle{\prod_{i=1}^{n}}w_i)^{\frac{1}{n}} > e^{-\gamma}$ then $P$ is weakly mixing and the set $\{(a,b)\in A\times A:\  (a,x,b,y)\in Trans_{P\times P} \ \text{for some}\ x,y\in X\}$ has measure $1$ with respect to the product measure induced from $\mu.$\par
	$(ii)$ If $\limsup_n(\displaystyle{\prod_{i=1}^{n}}w_i)^{\frac{1}{n}} < e^{-\gamma}$ then either $P$ is not weakly mixing or the set $\{(a,b)\in A\times A:\  (a,x,b,y)\in Trans_{P\times P} \ \text{for some}\ x,y\in X\}$ has measure $0$ with respect to the product measure induced from $\mu.$
	
\end{Corollary}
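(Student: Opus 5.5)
Part (i) will follow from Theorem \ref{wmsp}, and part (ii) from the contrapositive of Theorem \ref{weakly mixing}. In both cases I work with $T=B_w$ on $X=\ell^p(\mathbb{N})$, reading the base map $g$ as $f$. I take $D_1=D_2=c_{00}$, the finitely supported sequences, and let $S$ be the weighted forward shift $S(x_1,x_2,\dots)=(0,x_1/w_1,x_2/w_2,\dots)$. Then $B_wS=I$ on $c_{00}$, so condition (iii) holds trivially for every sequence $(n_k)$. Condition (i) also holds trivially, since $B_w^{n}x=0$ for $x\in c_{00}$ once $n$ exceeds the support length, so $\limsup\|B_w^{n_k}x\|^{1/n_k}=0<e^{-\gamma}$.

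For (i) the substance is condition (ii). For $y=e_j$ we have $S^n e_j=e_{j+n}/(w_j\cdots w_{j+n-1})$, so
\[
\|S^n e_j\|^{1/n}=\Bigl(\prod_{i=j}^{j+n-1}w_i\Bigr)^{-1/n}.
\]
For fixed $j$ this behaves like $\bigl(\prod_{i=1}^{n}w_i\bigr)^{-1/n}$, because the missing factors $w_1\cdots w_{j-1}$ and the extra factors $w_{n}\cdots w_{n+j-1}$ are finitely many and bounded. (The lower bound on the extra factors needs a little care; if it fails I will use $\|B_w\|<\infty$ together with a shift of index in the chosen subsequence.)

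The hypothesis $\limsup_n(\prod_{i\le n} w_i)^{1/n}>e^{-\gamma}$ gives an increasing sequence $(n_k)$ with $(\prod_{i\le n_k}w_i)^{1/n_k}>e^{-\gamma}+\delta$. Along it, $\limsup_k\|S^{n_k}e_j\|^{1/n_k}<e^{\gamma}$. Since $\|u+v\|^{1/n}\le(2\max(\|u\|,\|v\|))^{1/n}$, this extends from the $e_j$ to all of $c_{00}$.

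The main obstacle is that Theorem \ref{wmsp} asks for a \emph{syndetic} sequence $(n_k)$, whereas a limsup only gives a sparse one. My first attempt is to use that the proof of Theorem \ref{mini-weak}, on which Theorem \ref{wmsp} rests, only needs thickness of the hitting sets on the fiber. That thickness was obtained by letting $i\in\{0,\dots,r\}$ act after $n_k$, which works for any increasing $(n_k)$. Syndeticity of the base return times, in turn, comes from minimality or ergodicity of $f\times f$, where for the measure-$1$ conclusion it is the Birkhoff set $C\times C$ that is used. So I will re-run the argument of Theorem \ref{mini-weak} for $B_w\times B_w$ on $X\times X$ with the Euclidean norm and base $(A\times A,\mu\times\mu,f\times f)$. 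This base is ergodic because $f$ is weakly mixing. The run shows that for every $(a,b)\in C\times C$ the sets $E((a,b),B,V)$ are open and dense, and the Baire category theorem then gives the transitive points, yielding both weak mixing and the measure-$1$ claim. If a syndetic sequence is genuinely needed, I will instead try to fill gaps by taking $n_k+i$ for bounded $i$, controlling $\|S^{n_k+i}y\|$ with $\|S^{n_k+i}e_j\|\le\|S^{n_k}e_{j+i}\|\cdot\prod_{\ell<i}w_{j+\ell}^{-1}$.

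For (ii), suppose $P$ is weakly mixing and the set of base pairs carrying transitive points has positive measure. By Theorem \ref{weakly mixing} there are $(n_k)$, $D_1$, $D_2$, $S_{n_k}$ with $\limsup\|S_{n_k}y\|^{1/n_k}\le e^\gamma$ and $B_w^{n_k}S_{n_k}y\to y$. Taking $y$ close to $e_1$, the first coordinate gives $\|B_w^{n_k}S_{n_k}y\|\approx1$. On the other hand, the first coordinate of $B_w^{n}z$ equals $(\prod_{i\le n}w_i)\,z_{n+1}$, so it is at most $(\prod_{i\le n_k}w_i)\|z\|$. This forces $\prod_{i\le n_k}w_i\cdot e^{n_k(\gamma+\epsilon)}\gtrsim1$, that is, $\limsup(\prod w_i)^{1/n}\ge e^{-\gamma}$, contradicting the hypothesis. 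Hence either $P$ is not weakly mixing or the set has measure $0$. The dichotomy between positive measure and measure $0$ rests on the set being $f\times f$-invariant and $f\times f$ being ergodic, which I will check directly from the definition of transitive points.
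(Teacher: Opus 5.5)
Part (ii) is correct and follows the paper's route: the contrapositive of Theorem \ref{weakly mixing}, with your first-coordinate bound $|(B_w^{n}z)_1|\le(\prod_{i\le n}w_i)\|z\|$ giving the contradiction. Your closing invariance/ergodicity check is unnecessary, since a set without positive measure has measure $0$. The genuine gap is in part (i).

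You are right that Theorem \ref{wmsp} needs a syndetic $(n_k)$ and that the $\limsup$ hypothesis does not supply one. The paper's one-line deduction from Theorem \ref{wmsp} passes over this. But neither of your workarounds closes the gap.
\begin{itemize}
\item Re-running Theorem \ref{mini-weak} for $f\times f$ requires the base return times $\{n:(f\times f)^n(a,b)\in B\}$ to be syndetic. In that proof syndeticity came from \emph{minimality} (almost periodicity of $f^p(a)$), not from ergodicity. Ergodicity of $f\times f$ only gives return sets of positive density for a.e.\ $(a,b)$, and a thick set of density zero can miss such a set entirely.
\item Moreover, $f\times f$ is never minimal, because the diagonal is a proper closed invariant set. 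So points $(a,b)$ with dense orbits are not almost periodic. The corollary does not assume $f$ minimal in any case.
\item Your fallback also fails. Bounded offsets cannot make a sequence with unbounded gaps syndetic. In fact no syndetic sequence satisfies (ii) of Theorem \ref{wmsp} in general. The hypothesis permits arbitrarily long stretches of $n$ with $(\prod_{i\le n}w_i)^{1/n}\le e^{-\gamma-\delta}$. On those stretches your own first-coordinate estimate forces $\|S_ny\|^{1/n}\gtrsim e^{\gamma+\delta}$ for every admissible $S_n$ and every $y$ with $y_1\neq0$, and a syndetic sequence enters every such stretch.
\end{itemize}

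What is missing is a replacement for ``thick $\cap$ syndetic''. Here is one that works.
\begin{itemize}
\item Let $W=\sup_i w_i<\infty$. For $m\le N$ you have $\prod_{i\le m}w_i\ge W^{-(N-m)}\prod_{i\le N}w_i$.
\item Hence if $(\prod_{i\le N_k}w_i)^{1/N_k}>e^{-\gamma}+\delta$, then $(\prod_{i\le m}w_i)^{1/m}>e^{-\gamma}+\delta/2$ on the whole window $[(1-c)N_k,N_k]$, for a fixed small $c>0$.
\item This also settles your index-shift worry for $e_j$ with $j\le J$: use the trimmed window $[(1-c)N_k,N_k-J]$.
\item So for $(a,b)\in C\times C$, the fiber hitting set of $P\times P$ for given opene sets contains these windows for all large $k$. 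These are intervals of length proportional to their position, which is much more than thickness.
\item On the base, take $(a,b)$ Birkhoff-generic for the ergodic map $f\times f$ with respect to a countable base of $A\times A$. Since $\mu$ has full support, the return set $R$ to a basic opene $B$ satisfies $|R\cap[1,N]|/N\to(\mu\times\mu)(B)>0$.
\item Therefore $R$ meets $((1-c)N,N-J]$ for all large $N$.
\end{itemize}
With ``proportional windows meet Birkhoff-dense return sets'' in place of ``thick meets syndetic'', your Baire-category argument goes through. It gives a transitive point of $P\times P$ over a full-measure set of $(a,b)$, which yields (i).
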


In the following example, we will show that converse of the Theorem \ref{weakly mixing} is not true. Also, the conditions $(i)-(iii)$ mentioned in Theorem \ref{mini-weak} is strictly stronger than weakly mixing property of the sequence of operators $(h_n(a)T^n)$.\par

\begin{Example}\label{ex1}

	Let the system be $(A,\mu, g)$ be ergodic but not strictly ergodic, where $A$ is a compact metric space and  and $\mu$ is a probability measure on $A$. Also we assume that the semicascade $(A,g)$ is not syndetically transitive (A semicascade $(A,g)$ is called syndetically transitive if for every pair of opene subsets $U$ and $V$ of $A$, the set $N(U,V)$ is syndetic). 
	\par
	Now since $(A,g)$ is not syndetically transitive, there exist two opene subsets $A_1$ and $A_2$ of $A$ such that $N_g(A_1,A_2)$ is not syndetic, which implies that $\mathbb{N}\setminus N_g(A_1,A_2)$ contains a thick set. Then for every $k\in \mathbb{N},$ we can choose $r_k$ such that $(r_k-k),\dots,r_k,\dots,(r_k+k)\}\notin N(A_1,A_2)$.\par
	Let $\gamma$ be any positive real number. Let us define a bounded sequence $(w_n)$ by\par
	\begin{center}
		
		$w_{n}=
		\begin{cases}
			1& \text{when}\ n=(r_k-i)\ \text{for}\  i=0,1,\dots, k-1\\
			e^{-2\gamma}& \text{when}\ n=(r_k+i)\  \text{for}\  i=1,\dots, k \\
			e^{-\gamma}& \text{elsewhere}
		\end{cases}$
	\end{center}
	
	Consequently, we have
	\begin{center}
		$\displaystyle{\prod_{i=1}^{n}}w_i=
		\begin{cases}
			e^{-\gamma(n-k+i)} & \text{when}\ n=(r_k\pm i)\ \text{for}\ i=0,1,\dots, k\\
			e^{-n\gamma}& elsewhere
		\end{cases}$
	\end{center}
	\vspace{0.10cm}
	Let $B_w$ be the weighted backward shift on $\ell^p(\mathbb{N})$ defined by $B_w(x_1,x_2,\dots)=(w_1x_2,w_2x_3,\dots)$. Now we consider the skew product $P:A\times \ell^p(\mathbb{N})\to A\times \ell^p(\mathbb{N})$ by $P(a,x)=(g(a),h(a)B_w(x)),$ where $h:A\to \mathbb{C}$ is defined by $h(a)=e^{\gamma}$ for every $a\in A.$\par
	Let $D_1=D_2=$ space all sequences, where all but finitely many terms of the sequence are zero. Let us consider a mapping $S: D_2\to D_2$, defined by $S(x_1,x_2,\dots)=(0,\frac{x_1}{\omega_1},\frac{x_2}{\omega_2},\dots)$. Then by taking $S_n=S^n, n\in \mathbb{N}$, one can easily check that for every $a\in A,$ $B_w$ satisfies all the $(i)-(iii)$ conditions of Theorem \ref{weakly mixing}. But $P$ is not topologically transitive.\par

	On the contrary suppose that $P$ is topologically transitive. Let $U$ and $V$ be the $\frac{1}{2}$-neighbourhood of $0$ and $e_1$ respectively in $\ell^p(\mathbb{N})$. Then there exists some $n$ such that $P^n(A_1\times U)\cap (A_2\times V)\neq \emptyset$ and so $n$ must be belong to $N_g(A_1,A_2).$ This implies that there is some $a\in A_1$ and $x\in U$ such that
	\begin{center}
		$||h^n(a)B_w^n(x)-e_1||\geq |\displaystyle{\prod_{i=1}^{n}}w_i e^{-n\gamma}x_n-1|>\frac{1}{2}$,
	\end{center}
	which is a contradiction. \par
	Moreover, $B_w$ does not satisfy the conditions $(i)-(iii)$ of Theorem \ref{mini-weak}, since if it satisfies these conditions then from Theorem 2.4 of \cite{bayart}, $P$ must be topologically transitive, which is again a contradiction. So the converse of Theorem \ref{weakly mixing} is not true.\par
	On the other hand one can see that for every $a\in A$, the sequence of operators $(h_{r_k}(a)B_w^{r_k})$ satisfies Universality Criterion i.e., weakly mixing. But still $P$ is not topologically transitive as the base $(A,g)$ is not minimal.

\end{Example}
	
	\begin{Theorem}
		Suppose that $P:A\times X \to A\times X$ is a skew-product defined by $P(a,x)=(f(a),h(a)Tx)$ as Definition \ref{skew def}, $\mu$ is a probability measure on $A$  and $f:A\to A$ is strictly ergodic. Let 
		\begin{center}
			$\gamma= \displaystyle{\int_A} \log |h| d\mu$
		\end{center} 
		be finite. We assume that $f$ is topologically mixing and there exist an increasing sequence of positive integers $(n_k)$ with $\sup_k (n_{k+1}-n_k)< \infty$, two dense subsets $D_1, D_2$ of $X$ and a sequence of mappings $S_{n_k}:D_2 \to X$ such that\par 
		$(i)$ $\limsup_k || T^{n_k} x||^{\frac{1}{n_k}}< e^{-\gamma}$ for every $x\in D_1.$ \par 
		$(ii)$ $\limsup_k || S_{n_k} y||^{\frac{1}{n_k}}< e^{\gamma}$ for every $y\in D_2.$ \par
		$(iii)$ $|| T^{n_k}S_{n_k}y - y|| \to 0$ as $k\to \infty$ for every $y\in D_2.$\\ Then $P$ is topologically mixing.	
	\end{Theorem}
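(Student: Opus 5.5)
The plan is to verify mixing directly: for opene $A_1,A_2\subset A$ and $U,V\subset X$, find $N$ such that $P^n(A_1\times U)\cap(A_2\times V)\neq\emptyset$ for every $n\geq N$. Two ingredients are used. First, since $f$ is strictly ergodic, Oxtoby's Theorem (Theorem \ref{oxtoby}) gives uniform growth control of $h_n$; this needs care, because $\log|h|$ may not be continuous if $h$ vanishes. I will first note that if $h(a_0)=0$ for some $a_0$, then $\gamma=-\infty$ would be forced by uniform upper semicontinuous averaging. So I treat $h$ as nonvanishing (finiteness of $\gamma$ together with unique ergodicity should give this; if not, I fall back to the upper bound via Oxtoby applied to $\max(\log|h|,-M)$ for the $T^{n}z$ term and handle the lower bound separately). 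This yields: for every $\eta>0$ there is $N_0$ with $e^{n(\gamma-\eta/4)}<|h_n(a)|<e^{n(\gamma+\eta/4)}$ for all $a\in A$ and all $n\geq N_0$. Second, topological mixing of $f$ gives $N_1$ with $f^n(A_1)\cap A_2\neq\emptyset$ for all $n\geq N_1$.

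Now let $r=\sup_k(n_{k+1}-n_k)<\infty$. Following the proof of Theorem \ref{mini-weak}, pick $z\in U\cap D_1$ and $y_i\in T^{-i}(V)\cap D_2$ for $i=0,\dots,r$, together with $\varepsilon$ such that $B(z,\varepsilon)\subset U$ and $B(y_i,2\varepsilon)\subset T^{-i}(V)$. Conditions $(i)$–$(iii)$ then give $\eta>0$ and $K$ with $\|T^{n_k}z\|\le e^{-n_k(\gamma+\eta/2)}$, $\|S_{n_k}y_i\|\le e^{n_k(\gamma-\eta/2)}$ and $\|T^{n_k}S_{n_k}y_i-y_i\|<\varepsilon$ for all $k>K$. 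Any $n$ larger than $\max(n_K,N_0,N_1)+r$ can be written as $n=n_k+i$ with $k>K$ and $0\le i\le r$; this is exactly where the bounded-gap hypothesis is used.

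For such an $n$, choose any $a\in A_1\cap f^{-n}(A_2)$, which is nonempty by mixing of $f$, and set $x=z+h_n(a)^{-1}S_{n_k}y_i$. The uniform bounds give $|h_n(a)|^{-1}\|S_{n_k}y_i\|\le e^{-n(\gamma-\eta/4)}e^{n_k(\gamma-\eta/2)}$, which is small uniformly in $a$ once $n_k$ is large (the factor $e^{\pm i\gamma}$ with $i\le r$ is absorbed). Hence $x\in U$. Moreover $h_n(a)T^{n_k}x=h_n(a)T^{n_k}z+T^{n_k}S_{n_k}y_i$, and $|h_n(a)|\,\|T^{n_k}z\|\le e^{n(\gamma+\eta/4)}e^{-n_k(\gamma+\eta/2)}\to0$, so this vector lies in $B(y_i,2\varepsilon)\subset T^{-i}V$. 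Applying $T^i$ gives $h_n(a)T^nx\in V$, and therefore $P^n(a,x)\in A_2\times V$ for all large $n$.

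The main obstacle is the uniformity in $a$. This is why strict ergodicity and Oxtoby's Theorem are needed rather than Birkhoff's theorem: the point $a$ depends on $n$, so we cannot fix a single Birkhoff-generic point. The continuity of $\log|h|$ (nonvanishing of $h$) is the delicate technical point I would settle first.
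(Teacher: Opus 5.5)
Your argument is the paper's proof. Mixing of $f$ supplies a point of $A_1\cap f^{-n}(A_2)$ for every large $n$, and the bounded gaps let you write $n=n_k+i$ with $0\le i\le \sup_k(n_{k+1}-n_k)$. Oxtoby's theorem gives bounds on $|h_n|$ that are uniform in the base point, which is exactly why a single Birkhoff-generic point will not do. You then perturb $z\in U\cap D_1$ by $h_n(a)^{-1}S_{n_k}y_i$. Your estimates are correct, including the factors $e^{i(\gamma+\eta/4)}$ and $e^{-i(\gamma-\eta/4)}$, which stay bounded because $i$ does. You also correctly use $\|T^{n_k}S_{n_k}y_i-y_i\|<\varepsilon$ where the paper's proof has a misprint.

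The one thing to correct is your preliminary claim that a zero of $h$ forces $\gamma=-\infty$. This is false. Integrability of $\log|h|$ depends only on how fast $h$ vanishes relative to the $\mu$-mass of small balls around the zero. For example, for an irrational rotation with Lebesgue measure and $h(a)=d(a,a_0)$, one gets $\gamma=-1-\log 2$, which is finite.

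The semi-uniform ergodic theorem for the upper semicontinuous function $\log|h|$ gives only the uniform upper bound $\sup_{a}|h_n(a)|\le e^{n(\gamma+\eta/4)}$ for large $n$. You rightly obtain this bound via $\max(\log|h|,-M)$. The uniform lower bound, however, cannot hold if $h(a_0)=0$, since then $h_n(a_0)=0$ for all $n\ge 1$.

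``Handle the lower bound separately'' is therefore not an argument. You would need, for each large $n$, some point of the open set $A_1\cap f^{-n}(A_2)$ at which $|h_n|$ is not exponentially smaller than $e^{n\gamma}$. Topological mixing gives no lower bound on $\mu(A_1\cap f^{-n}(A_2))$ that is uniform in $n$. Without such a bound you cannot exploit the convergence in measure of $\frac1n\log|h_n|$ to $\gamma$ (it would work if $\mu$ were measure-theoretically mixing).

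So the proof is complete only when $h$ has no zeros, so that $\log|h|$ is continuous; $\gamma$ is then automatically finite. This applies to your proof and equally to the paper's, which applies Theorem \ref{oxtoby} to $\log|h|$ without comment. You should state zero-freeness of $h$ as a hypothesis rather than assert that it follows from finiteness of $\gamma$.
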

	
	\begin{proof}
		Suppose that $M=\sup_k(n_{k+1}-n_k).$ Let $B\times U$ and $C\times V$ be two non-empty open subsets of $A\times X.$ Let us choose $x\in U\cap D_1$ and for every $i\in \{0,1,\dots, M\}$, $y_i\in T^{-i}(V)\cap D_2.$ Also we may choose $\varepsilon> 0$ small enough such that $B(x,\varepsilon)\subset U$ and $B(y_i,2\varepsilon)\subset T^{-i}(V)$ for each $i\in \{0,1,\dots,M\}.$ Now since $f$ is topologically mixing, there exists $N_1\in \mathbb{N}$ such that for every $n\geq N_1$ there is some $b_n\in B$ with $f^n(b_n)\in C.$ From the given conditions, we can say that there exists some $\eta>0$ and $K\in \mathbb{N}$ such that $||T^{n_k}(x)||^{\frac{1}{n_k}}\leq e^{-\gamma-\frac{\eta}{2}}$, $||S_{n_k}(y_i)||^{\frac{1}{n_k}}\leq e^{\gamma-\frac{\eta}{2}}$ and $||S_{n_k}(y_i)-y_i||<\varepsilon$ for every $k>K$, where $i=0,1,\dots,M.$ On the other hand using Theorem \ref{oxtoby}, we have there exists some $N_2\in \mathbb{N}$ such that for every $a\in A$ and for every $n\geq N_2,$ $e^{n(\gamma-\frac{\eta}{4})}<h_n(a)<e^{n(\gamma+\frac{\eta}{4})}.$\par 
		Now for every $n\geq n_1$, there is some $n_k$ and $0\leq i\leq M$ such that $n=n_k+i$ and we consider $y_n=x+h_n(b_n)^{-1}S_{n_k}(y_i).$ Let us take $N>{N_1,N_2,n_K}$ with $e^{-N\frac{\eta}{4}}<\varepsilon.$ Then for every $n>N,$ one can easily check that $y_n\in U$ and $h_n(b_n)T^n(y_n)=T^i(h_n(b_n)T^{n_k}(y_n))\in V$ as
		\begin{center}
			$||h_n(b_n)T^{n_k}(y_n)-y_i||=||h_n(b_n)T^{n_k}(x)||+||S_{n_k}(y_i)-y_i||<2\varepsilon.$
	\end{center}
		Hence for every $n>N,$ $(b_n,y_n)\in B\times U$ and $P^n(b_n,y_n)\in C\times V.$ Therefore $P$ is topologically mixing.		
	\end{proof}

\end{document}